\theoremstyle{definition}
\newtheorem{Def}{Def}[section]
\newtheorem{Defi}[Def]{Definition}
\newtheorem{Them}[Def]{Theorem}
\newtheorem{Lem}[Def]{Lemma}
\newtheorem{Cor}[Def]{Corollary}
\newtheorem{Prop}[Def]{Proposition}
\newtheorem{Examp}[Def]{Example}
\DeclareMathOperator{\lcm}{lcm}
\DeclareMathOperator{\Mat}{Mat}
\DeclareMathOperator{\rank}{rank}
\DeclareMathOperator{\Atom}{Atom}
\numberwithin{equation}{section}
\title{Characteristic quasi-polynomials for deformations of Coxeter arrangements of types A, B, C, and D}
\author{Yusuke Mori\thanks{Department of Mathematics, Nagoya Institute of Technology, Aichi, 466-8555, Japan. Email: y.mori.404@nitech.jp}\quad
and\quad Norihiro Nakashima\thanks{Department of Mathematics, Nagoya Institute of Technology, Aichi, 466-8555, Japan. Email: nakashima@nitech.ac.jp (Corresponding author)}}
\date{}
\begin{document}
\maketitle

\begin{abstract}
Kamiya, Takemura, and Terao introduced a characteristic quasi-polynomial which enumerates the numbers of elements in the complement of hyperplane arrangements modulo positive integers.
In this paper, we compute the characteristic quasi-polynomials for specific arrangements which contain the Coxeter arrangements of types A, B, C, and D described by the orthonormal basis.
We also compute the characteristic quasi-polynomials for their deletion arrangements and we can show that they are factorized.
From this result, the poset generated by hypertori of the corresponding toric arrangement is an inductive poset.

\noindent
{\bf Key Words:}
hyperplane arrangement, characteristic quasi-polynomial, Coxeter arrangement
\vspace{2mm}

\noindent
{\bf 2020 Mathematics Subject Classification:}
Primary 32S22, Secondary 52C35.
\end{abstract}

\section{Introduction}
Kamiya, Takemura, and Terao \cite{KTT08, KTT11} proved, in two different ways, that the cardinality of the complement of a hyperplane arrangement modulo positive integers is a quasi-polynomial, called the characteristic quasi-polynomial.
The first proof is based on the theory of elementary divisors, whereas the second proof uses the theory of Ehrhart quasi-polynomials, studied by Beck and Zaslavsky \cite{Beck-Zaslavsky} in the framework of inside-out polytopes.
In these works, Kamiya, Takemura, and Terao also introduced a period of the quasi-polynomial, called the lcm period. Subsequently, various studies have investigated characteristic quasi-polynomials and their periods.

In general, the lcm period is not necessarily the minimum period; when the minimum period is strictly smaller, we say that a period collapse occurs. Higashitani, Tran, and Yoshinaga \cite{HTY2112} explicitly computed characteristic quasi-polynomials of certain non-central arrangements and exhibited concrete examples where period collapse occurs. In the same paper, they proved that in the central case, the lcm period always equals the minimum period.
Furthermore, the $k$-constituents of characteristic quasi-polynomials have been studied as characteristic polynomials of posets consisting of layers of abelian arrangements. Pagaria, Pismataro, Tran, and Vecchi \cite{PPTV2024} introduced a class of posets, called inductive posets, which allows computation of $k$-constituents while preserving their factorization property.\footnote{A poset $\mathcal{P}$ is also said to be inductive if every chain in $\mathcal{P}$ has an upper bound in $\mathcal{P}$. This terminology differs from the inductive posets defined in \cite{PPTV2024}.}

On the other hand, Coxeter arrangements and their deformations—such as extended Shi and extended Linial arrangements—have served as rich sources of examples in the study of characteristic quasi-polynomials. Kamiya, Takemura, and Terao \cite{KTT10} obtained a formula for the generating function of the characteristic quasi-polynomial of Coxeter arrangements, expressed via the basis of simple roots; this formula essentially derives from earlier results of Athanasiadis \cite{Athanasiadis1996} and Blass and Sagan \cite{Blass-Sagan}. In the first version \cite{KTT10-arxiv-v1} of \cite{KTT10}, they explicitly determined characteristic quasi-polynomials for the Coxeter arrangements of types B, C, and D described by the orthonormal basis. Yoshinaga \cite{Yoshinaga18b} determined the characteristic quasi-polynomial of the extended Shi arrangement and proved that it is a polynomial. The conjecture of Postnikov and Stanley \cite{Postnikov-Stanley} regarding the roots of characteristic polynomials of extended Linial arrangements was partially verified by Yoshinaga \cite{Yoshinaga18a} using expressions in terms of Eulerian polynomials, and was later completely resolved by Tamura \cite{Tamura}.

In this paper, we introduce two specific classes of hyperplane arrangements, defined in this work.
The first class consists of arrangements that include the Coxeter arrangement of type A described by the orthonormal basis as a special case, and the second consists of arrangements that include the Coxeter arrangements of types B, C, and D described by the orthonormal basis as special cases.
For these classes, we explicitly compute the corresponding characteristic quasi-polynomials by counting the number of points in their complements modulo positive integers.

Our focus on these classes is motivated by two considerations.
First, although characteristic quasi-polynomials have attracted attention in recent years, there are still not so many examples whose explicit forms are known, especially beyond classical Coxeter arrangements. Our classes provide infinite families of arrangements for which characteristic quasi-polynomials can be computed concretely.
Second, these classes are compatible with the deletion–restriction formula in the sense that it is easy to identify subclasses closed under restriction modulo integers. This enables us to construct sequences of subarrangements that preserve the condition that all $k$-constituents factor into linear polynomials. As a consequence, we prove that the posets generated by hypertori of the corresponding toric arrangements form inductive posets, providing new concrete examples in a field where research has only recently begun and where few explicit examples are known.

Finally, we also present examples of sequences of hyperplane arrangements in which the characteristic quasi-polynomials do not factorize into linear polynomials, illustrating boundaries of factorization phenomena within these structured classes.

The organization of this paper is as follows.
In Section \ref{sec-cox-quasi-charpoly}, we recall the definition of the characteristic quasi-polynomial and survey known results, including closed formulas for the characteristic quasi-polynomials of the Coxeter arrangements of types B, C, and D described by the orthonormal basis.
In Section \ref{sec-char-poly-deform-Cox}, we introduce the two classes of arrangements defined in this work and explicitly compute their characteristic quasi-polynomials.
Using these results together with the deletion–restriction formula, we construct sequences of subarrangements whose $k$-constituents are all factorized in Section \ref{sec-deleting-hyperplane}.
Finally, in Section \ref{sec-other-examples}, we compute the characteristic quasi-polynomials of certain subarrangements of Coxeter arrangements of types B and D and demonstrate that they do not factor into linear polynomials with integer coefficients.


\section{Preliminaries}\label{sec-cox-quasi-charpoly}
In this section, we fix notations and introduce some results for characteristic quasi-polynomials.
A function $\chi:\mathbb{Z}_{>0}\rightarrow\mathbb{Z}$ is called a quasi-polynomial if there exist a positive integer $\rho\in\mathbb{Z}_{>0}$ and polynomials $\chi^1(t),\chi^2(t),\dots,\chi^{\rho}(t)\in\mathbb{Z}[t]$ such that for $q\in\mathbb{Z}_{>0}$,
\begin{align*}
\chi(q)=
\begin{cases}
\chi^1(q)\quad&q\equiv 1\  mod\ \rho,\\
\chi^2(q)\quad&q\equiv 2\  mod\ \rho,\\
\quad\vdots&\quad\vdots\\
\chi^{\rho}(q)\quad&q\equiv \rho\ mod\ \rho.\\
\end{cases}
\end{align*}
In addition, we call $\chi$ a monic quasi-polynomial if $\chi^1(t),\chi^2(t),\dots,\chi^{\rho}(t)$ are monic polynomials.
The number $\rho$ is called a period and the polynomials $\chi^1(t),\chi^2(t),\dots,\chi^{\rho}(t)$ are called the $k$-constituents of the quasi-polynomial $\chi$.
A quasi-polynomial $\chi$ with a period $\rho$ is said to have the gcd property with respect to $\rho$ if the $k$-constituents $\chi^1(t),\chi^2(t),\dots,\chi^{\rho}(t)$ depend on $k$ only through $\gcd(\rho,k)$, i.e., $\chi^a(t)=\chi^b(t)$ if $\gcd(\rho,a)=\gcd(\rho,b)$.

Let $S\in\Mat_{m\times n}(\mathbb{Z})$ be an $m\times n$ matrix with integral entries and $\bm{b}\in\mathbb{Z}^n$ an integral vector.
Let $q\in\mathbb{Z}_{>0}$ and define $\mathbb{Z}_q:=\mathbb{Z}/q\mathbb{Z}$.
For $a\in\mathbb{Z}$, let $[a]_q:=a+q\mathbb{Z}\in\mathbb{Z}_q$ be the $q$ reduction of $a$.
For a matrix or vector $A$ with integral entries, let $[A]_q$ stand for the entry-wise $q$ reduction of $A$.
The matrix $S$ and the vector $\bm{b}$ define an arrangement $\mathscr{A}_q=\mathscr{A}_q(S,\bm{b})$ in $\mathbb{Z}_q^m$.
Indeed if $S=(\bm{h}_1\ldots \bm{h}_n)$ and $\bm{b}=(b_1,\dots,b_n)$ where $\bm{h}_1,\dots,\bm{h}_n$ are the columns of $S$, then the arrangement $\mathscr{A}_q$ is the collection of $H_{i,q}=\{\bm{x}=(x_1,\dots,x_m)\in\mathbb{Z}_q^{m}\mid \bm{x}[\bm{h}_i]_q=[b_i]_q\}$ for $i=1,\dots,n$.
We call the case $\bm{b}=\bm{0}\in\mathbb{Z}^m$ the central case, and call the case $\bm{b}\neq \bm{0}\in\mathbb{Z}^m$ the non-central case.
For $q\in\mathbb{Z}_{>0}$, let $\mathbb{Z}_q^{\times}:=\mathbb{Z}_q\setminus\{0\}$.
The complement $M_{\mathscr{A}_q}(q)$ of the arrangement $\mathscr{A}_q$ is defined by
\begin{align*}
M_{\mathscr{A}_q}(q):=\mathbb{Z}_q^m\setminus\bigcup_{H_q\in\mathscr{A}_q}H_q
=\left\{\bm{x}=(x_1,\dots,x_m)\in\mathbb{Z}_q^m\,\middle|\,\bm{x}[S]_q-[\bm{b}]_q\in\left(\mathbb{Z}_q^{\times}\right)^n\right\}.
\end{align*}
For convenience, when we consider arrangement over $\mathbb{Z}_q$, we correspond the pair $(S,\bm{b})$ to
\begin{align*}
\mathscr{A}=\{\{a_{1,i}x_1+\cdots+a_{m,i}x_m=b_i\}\mid i=1,\dots,n\},
\end{align*}
where $\bm{h}_i=\,^t(a_{1,i},\dots,a_{m,i})$ for any $i=1,\dots,n$.
Note that $\mathscr{A}$ differs from the hyperplane arrangement on $\mathbb{R}^m$.
For example, if $S= \begin{pmatrix} 2&3&0\\ 0&0&1 \end{pmatrix}$, then the hyperplanes $\{(x_1,x_2)\in\mathbb{R}^2\mid 2x_1=0\}$ and $\{(x_1,x_2)\in\mathbb{R}^2\mid 3x_1=0\}$ on $\mathbb{R}^2$ are coincide, but we write $\mathscr{A}=\{\{2x_1=0\},\{3x_1=0\},\{x_2=0\}\}$ if $\mathscr{A}$ is an arrangement over $\mathbb{Z}_q$.

We next define the lcm period and the characteristic quasi-polynomial.
For $\emptyset\neq J\subseteq \{1,\dots,n\}$, the matrix $S_J\in\Mat_{m\times|J|}(\mathbb{Z})$ is defined by the submatrix of $S$ consisting of the columns indexed by $J$.
Set $\ell(J):=\rank(S_J)$.
Let $e_{J,1}|e_{J,2}|\cdots|e_{J,\ell(J)}$ be the elementary divisors of $S_J$.
Then the lcm period of $S$ is defined by
\begin{align*}
\rho_S:=\lcm\left(e_{J,\ell(J)}\,\middle|\,\emptyset\neq J\subseteq \{1,\dots,n\}\right).
\end{align*}
We also write $\rho_{\mathscr{A}}:=\rho_S$ since the next theorem guarantees that the lcm period depends only on the matrix $S$ (does not depend on the vector $\bm{b}$ in the non-central case).
\begin{Them}[Kamiya, Takemura, and Terao \cite{KTT08, KTT11}]\label{thm-charquasi-KTT}
The function $|M_{\mathscr{A}_q}(q)|$ is a monic quasi-polynomial with a period $\rho_{\mathscr{A}}$ having the gcd property with respect to $\rho_{\mathscr{A}}$.
\end{Them}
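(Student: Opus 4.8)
The plan is to combine inclusion--exclusion over the $n$ hyperplanes with the Smith normal form of each column submatrix $S_J$. For $i=1,\dots,n$ set $A_i:=\{\bm{x}\in\mathbb{Z}_q^m\mid \bm{x}[\bm{s}_i]_q=0\}$, so that $M_S(q)=\mathbb{Z}_q^m\setminus\bigcup_{i=1}^n A_i$. The inclusion--exclusion principle then gives
\begin{align*}
|M_S(q)|=\sum_{J\subseteq\{1,\dots,n\}}(-1)^{|J|}N_J(q),\qquad N_J(q):=\left|\{\bm{x}\in\mathbb{Z}_q^m\mid \bm{x}[S_J]_q=\bm{0}\}\right|,
\end{align*}
where $N_\emptyset(q)=q^m$. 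The problem thus reduces to evaluating each $N_J(q)$, and the whole of the quasi-polynomial structure will be read off from these evaluations.

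First I would compute $N_J(q)$ through a Smith normal form $S_J=UDV$ with $U\in GL_m(\mathbb{Z})$, $V\in GL_{|J|}(\mathbb{Z})$, and $D$ the $m\times|J|$ matrix carrying $e_{J,1},\dots,e_{J,\ell(J)}$ on its diagonal and zeros elsewhere. Since $\det U,\det V\in\{\pm1\}$, the reductions $[U]_q$ and $[V]_q$ are invertible over $\mathbb{Z}_q$, so the substitution $\bm{y}=\bm{x}[U]_q$ is a bijection of $\mathbb{Z}_q^m$ under which $\bm{x}[S_J]_q=\bm{0}$ becomes $\bm{y}[D]_q=\bm{0}$. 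The latter decouples into the independent congruences $y_j e_{J,j}\equiv 0\pmod q$ for $j=1,\dots,\ell(J)$, each having exactly $\gcd(e_{J,j},q)$ solutions, together with $m-\ell(J)$ free coordinates. Hence
\begin{align*}
N_J(q)=q^{m-\ell(J)}\prod_{j=1}^{\ell(J)}\gcd(e_{J,j},q).
\end{align*}

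Quasi-polynomiality, monicity, and the gcd property all follow by inspecting this formula. Because every elementary divisor $e_{J,j}$ divides $\rho_S$, each factor $\gcd(e_{J,j},q)$ depends only on the residue of $q$ modulo $\rho_S$, so on a fixed residue class each $N_J(q)$ is a constant multiple of $q^{m-\ell(J)}$, and the signed sum over $J$ yields a single polynomial per class; this exhibits $|M_S(q)|$ as a quasi-polynomial of period $\rho_S$. Monicity is immediate, since only $J=\emptyset$ contributes the top power $q^m$ (with coefficient $1$), while every $J\neq\emptyset$ has $\ell(J)\geq 1$ and hence degree at most $m-1$, so each constituent is monic of degree $m$. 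For the gcd property I would invoke the elementary identity $\gcd(d,a)=\gcd(d,\gcd(a,N))$ valid whenever $d\mid N$; applying it with $N=\rho_S$ and $d=e_{J,j}$ shows that each $\gcd(e_{J,j},a)$ depends on $a$ only through $\gcd(a,\rho_S)$, and substituting into the explicit $a$-constituent $\sum_J(-1)^{|J|}t^{m-\ell(J)}\prod_j\gcd(e_{J,j},a)$ shows the constituent is determined by $\gcd(a,\rho_S)$.

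I expect the main obstacle to be the careful bookkeeping in the Smith normal form step: verifying that reduction modulo $q$ preserves the invertibility of $U$ and $V$, that the change of variables is genuinely a bijection of $\mathbb{Z}_q^m$, and that the solution count of $\bm{y}[D]_q=\bm{0}$ splits exactly as claimed, with the zero columns of $D$ correctly accounting for the $m-\ell(J)$ free coordinates. Once the formula for $N_J(q)$ is secured, the inclusion--exclusion assembly and the number-theoretic gcd identity are routine.
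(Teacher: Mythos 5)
Your argument is correct and is essentially the cited ``first proof via the theory of elementary divisors'' of Kamiya--Takemura--Terao: the paper itself states Theorem \ref{thm-charquasi-KTT} as background without reproducing a proof, and your combination of inclusion--exclusion with the Smith normal form evaluation $N_J(q)=q^{m-\ell(J)}\prod_{j}\gcd(e_{J,j},q)$ is exactly that route. The only implicit hypothesis you rely on for monicity is that $S$ has no zero columns (so that $\ell(J)\geq 1$ for $J\neq\emptyset$), which is the standard standing assumption for an arrangement of genuine hyperplanes.
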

The quasi-polynomial in Theorem \ref{thm-charquasi-KTT} is called the characteristic quasi-polynomial of the arrangement $\mathscr{A}$, and we write $\chi_{\mathscr{A}}^{quasi}(q)=|M_{\mathscr{A}_q}(q)|$.
The values of $k$-constituents are described as $\chi^k_{\mathscr{A}}(q)=|M_{\mathscr{A}_q}(q)|$ for all $k|\rho_{\mathscr{A}}$ and $q\in k+\rho_{\mathscr{A}}\mathbb{Z}$ with $q\gg 0$.
The next theorem states that we can obtain the minimum period by computing the lcm period in the central case\footnote{There exist examples of arrangements such that the lcm period is not the minimum period in the non-central case (see Theorem 1.1 in \cite{HTY2112})}.
\begin{Them}[Higashitani, Tran, and Yoshinaga \cite{HTY2112}]\label{thm-lcm=minimum}
The lcm period of $\chi^{quasi}_{\mathscr{A}}(q)$ coincides with the minimum period of $\chi^{quasi}_{\mathscr{A}}(q)$ in the central case.
\end{Them}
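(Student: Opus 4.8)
My plan is to make the quasi-polynomial completely explicit and then show, prime by prime, that no factor can be removed from $\rho_S$.

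First I would expand $\chi^{quasi}_S(q)=|M_S(q)|$ by inclusion--exclusion over the hyperplanes. Writing $B_i=\{\bm x\in\mathbb Z_q^m:\bm x[\bm s_i]_q=0\}$, one has
\[
\chi^{quasi}_S(q)=\sum_{J\subseteq\{1,\dots,n\}}(-1)^{|J|}N_J(q),\qquad N_J(q)=\big|\{\bm x\in\mathbb Z_q^m:\bm x[S_J]_q=\bm 0\}\big|.
\]
Passing to the Smith normal form of $S_J$ (whose elementary divisors are $e_{J,1}\mid\cdots\mid e_{J,\ell(J)}$) yields $N_J(q)=q^{\,m-\ell(J)}\prod_{k=1}^{\ell(J)}\gcd(e_{J,k},q)$. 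Since $e_{J,k}\mid e_{J,\ell(J)}\mid\rho_S$ for every $k$, each $\gcd(e_{J,k},q)$ depends on $q$ only through $d:=\gcd(q,\rho_S)$. Hence whenever $\gcd(q,\rho_S)=d$ we have $\chi^{quasi}_S(q)=\chi^{[d]}(q)$, where
\[
\chi^{[d]}(t):=\sum_{J\subseteq\{1,\dots,n\}}(-1)^{|J|}\,t^{\,m-\ell(J)}\prod_{k=1}^{\ell(J)}\gcd(e_{J,k},d).
\]
This re-proves the gcd property of Theorem \ref{thm-charquasi-KTT} and records the constituents explicitly.

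Next I would reduce the problem to a single prime. Let $\rho_{\min}$ be the minimum period. Since the minimum period of a quasi-polynomial divides every period, and every multiple of a period is again a period, we have $\rho_{\min}\mid\rho_S$, and it suffices to show that $\rho_S/p$ is \emph{not} a period for each prime $p\mid\rho_S$; then no prime factor of $\rho_S$ can be lowered and $\rho_{\min}=\rho_S$. Fix such a $p$ and set $a:=v_p(\rho_S)$ (the $p$-adic valuation). For $q$ in the residue class $0\bmod(\rho_S/p)$ a short computation gives $\gcd(q,\rho_S)\in\{\rho_S/p,\ \rho_S\}$, with both values attained for infinitely many $q$. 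If $\rho_S/p$ were a period, then on this class $\chi^{quasi}_S$ would agree with a single polynomial, which (since each constituent agrees with $\chi^{quasi}_S$ on infinitely many points) would force $\chi^{[\rho_S/p]}=\chi^{[\rho_S]}$. Thus it is enough to prove $\Delta(t):=\chi^{[\rho_S]}(t)-\chi^{[\rho_S/p]}(t)\not\equiv 0$.

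Comparing the two constituents term by term, only those $J$ for which some elementary divisor has $p$-valuation exactly $a$ survive; writing $c_J:=\#\{k:v_p(e_{J,k})=a\}$ gives
\[
\Delta(t)=\sum_{J:\,c_J\ge 1}(-1)^{|J|}\,t^{\,m-\ell(J)}\Big(\prod_{k=1}^{\ell(J)}e_{J,k}\Big)\big(1-p^{-c_J}\big),
\]
and by the very definition of $\rho_S$ at least one $J$ has $c_J\ge 1$. \textbf{The crux is to show this alternating sum is not identically zero.} I would attack its lowest-degree part, the coefficient of $t^{\,m-r^*}$ with $r^*:=\max\{\ell(J):c_J\ge1\}$, and separate it by $p$-adic valuation: one checks that $v_p$ of the $J$-term equals $(a-1)c_J+\sum_{k:\,v_p(e_{J,k})<a}v_p(e_{J,k})$, which is smallest for subsets carrying a single new $p$-divisible invariant factor. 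The delicate point, and the place where the central hypothesis must enter, is ruling out that cancellation among equal-valuation terms wipes out this minimal contribution; I would handle this by selecting, among the rank-$r^*$ contributing subsets, one realizing the strict minimum of the valuation (using the divisibility chain $e_{J,1}\mid\cdots\mid e_{J,\ell(J)}$ together with minimality of $J$), so that $v_p(\Delta)$ is finite and hence $\Delta\neq 0$. Establishing this no-cancellation statement in full generality is the main obstacle; the remaining steps are routine.
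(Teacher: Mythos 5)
This theorem is quoted in the paper from \cite{HTY2112} without proof, so there is no in-paper argument to compare against; your attempt can only be measured against the published proof of Higashitani--Tran--Yoshinaga, which proceeds by quite different, non-elementary means (a structural interpretation of the constituents as characteristic polynomials of posets of layers of associated arrangements, from which one extracts a coefficientwise comparison between $\chi^{[d]}$ and $\chi^{[d']}$ for $d\mid d'$). Your reduction, as far as it goes, is correct: the Smith-normal-form count $N_J(q)=q^{m-\ell(J)}\prod_k\gcd(e_{J,k},q)$, the resulting explicit constituents $\chi^{[d]}$ and the gcd property, the fact that the minimum period divides $\rho_S$, the observation that if $\rho_S/p$ were a period then $\chi^{[\rho_S/p]}$ and $\chi^{[\rho_S]}$ would coincide as polynomials (both being attained infinitely often on the class $0\bmod\rho_S/p$), and the formula $\Delta(t)=\sum_{J:c_J\ge1}(-1)^{|J|}t^{m-\ell(J)}\bigl(\prod_k e_{J,k}\bigr)\bigl(1-p^{-c_J}\bigr)$ are all sound.

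However, the argument stops exactly where the theorem begins: proving that this signed sum is not identically zero \emph{is} the content of the statement, and your sketch does not establish it. The coefficient of $t^{m-r^*}$ ranges over all $J$ with $\ell(J)=r^*$ and $c_J\ge1$; these subsets have varying cardinality, hence varying sign, and nothing in the term-by-term valuation $v_p=(a-1)c_J+\sum_{k:v_p(e_{J,k})<a}v_p(e_{J,k})$ prevents several subsets from realizing the same minimal valuation with contributions that cancel. Indeed, if $J_0\subseteq J$ with $\ell(J_0)=\ell(J)=r^*$ then $e_{J,k}\mid e_{J_0,k}$, so enlarging a minimal witness perturbs both the valuation and the sign in ways your argument does not control, and ``selecting one $J$ realizing the strict minimum'' is not available when the minimum is attained more than once. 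Since you yourself identify this no-cancellation claim as ``the main obstacle,'' the proposal is an honest and correctly executed reduction followed by an unproven assertion, i.e., it has a genuine gap at its crux; closing it requires structural input about the constituents (this is exactly what the machinery of \cite{HTY2112} supplies), not a finer valuation estimate.
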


Kamiya, Takemura, and Terao computed the characteristic quasi-polynomials for Coxeter arrangements
\begin{align*}
\mathcal{A}_m&:=\left\{\{x_i-x_j=0\}\,\middle|\,1\leq i<j\leq m\right\},\\
\mathcal{B}_m&:=\left\{\{x_i=0\}\,\middle|\,1\leq i\leq m\right\}\cup\left\{\{x_i-x_j=0\},\{x_i+x_j=0\}\,\middle|\,1\leq i<j\leq m\right\},\\
\mathcal{C}_m&:=\left\{\{2x_i=0\}\,\middle|\,1\leq i\leq m\right\}\cup\left\{\{x_i-x_j=0\},\{x_i+x_j=0\}\,\middle|\,1\leq i<j\leq m\right\},\\
\mathcal{D}_m&:=\left\{\{x_i-x_j=0\},\{x_i+x_j=0\}\,\middle|\,1\leq i<j\leq m\right\}
\end{align*}
described by the orthonormal basis in section 4.5 in the paper \cite{KTT10-arxiv-v1} (the first version of the published paper \cite{KTT10}).
\begin{Them}[Kamiya, Takemura, and Terao \cite{KTT10-arxiv-v1}]\label{thm-cox-chrquaipoly-ONB}
The minimum periods for $\mathcal{A}_m$, $\mathcal{B}_m$, $\mathcal{C}_m$, and $\mathcal{D}_m$ are $\rho_{\mathcal{A}_m}=1, \rho_{\mathcal{B}_m}=\rho_{\mathcal{C}_m}=\rho_{\mathcal{D}_m}=2$.
Meanwhile the characteristic quasi-polynomials for $\mathcal{A}_m$, $\mathcal{B}_m$, $\mathcal{C}_m$, and $\mathcal{D}_m$ are as follows.
\begin{align}
&\chi_{\mathcal{A}_m}^{quasi}(q)=\prod_{i=1}^m (q-i+1).\label{eq-chi-quasi-coxA}\\
&\chi_{\mathcal{B}_m}^{quasi}(q)=
\begin{cases}
\prod_{i=1}^m(q-2i+1)\quad&\text{if}\ q\ \text{is odd},\\
(q-m)\prod_{i=1}^{m-1}(q-2i)\quad&\text{if}\ q\ \text{is even}.
\end{cases}\label{eq-chi-quasi-coxB}
\\
&\chi_{\mathcal{C}_m}^{quasi}(q)=
\begin{cases}
\prod_{i=1}^{m}(q-2i+1)\quad&\text{if}\ q\ \text{is odd},\\
\prod_{i=1}^{m}(q-2i)\quad&\text{if}\ q\ \text{is even}.
\end{cases}\label{eq-chi-quasi-coxC}
\\
&\chi_{\mathcal{D}_m}^{quasi}(q)=
\begin{cases}
(q-m+1)\prod_{i=1}^{m-1}(q-2i+1)\quad&\text{if}\ q\ \text{is odd},\\
\left(q^2-2(m-1)q+m(m-1)\right)\prod_{i=1}^{m-2}(q-2i)\quad&\text{if}\ q\ \text{is even}.
\end{cases}\label{eq-chi-quasi-coxD}
\end{align}
\end{Them}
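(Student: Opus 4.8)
The plan is to compute $\chi_S^{quasi}(q)=|M_S(q)|$ by direct lattice-point counting, first translating membership in $M_S(q)$ into explicit conditions on the coordinates of $\bm{x}=(x_1,\dots,x_m)\in\mathbb{Z}_q^m$. Since $\bm{x}(\bm{e}_i-\bm{e}_j)=x_i-x_j$, $\bm{x}(\bm{e}_i+\bm{e}_j)=x_i+x_j$, $\bm{x}\bm{e}_i=x_i$, and $\bm{x}(2\bm{e}_i)=2x_i$, the defining requirement $\bm{x}[S]_q\in(\mathbb{Z}_q^{\times})^n$ unwinds as follows. For $S=A_m$ it reads $x_i\neq x_j$ for all $i<j$, so $\bm{x}\in M_{A_m}(q)$ precisely when $x_1,\dots,x_m$ are pairwise distinct; the number of such tuples is $q(q-1)\cdots(q-m+1)=\prod_{i=1}^m(q-i+1)$, independent of $q\bmod\rho$, giving period $1$. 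For $S=B_m,C_m,D_m$ the conditions are $x_i\neq x_j$ and $x_i+x_j\neq 0$ for all $i<j$, together with $x_i\neq 0$ (type $B$), $2x_i\neq 0$ (type $C$), or no further condition (type $D$).

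Because every admissible tuple has distinct entries, I would count admissible \emph{value sets} $T\subseteq\mathbb{Z}_q$ with $|T|=m$ and multiply by $m!$. The structural input is the involution $a\mapsto -a$ on $\mathbb{Z}_q$, whose fixed points solve $2a=0$: these are $\{0\}$ when $q$ is odd and $\{0,q/2\}$ when $q$ is even, while the remaining elements split into $p$ pairs $\{a,-a\}$, with $p=(q-1)/2$ for $q$ odd and $p=q/2-1$ for $q$ even. The condition $x_i+x_j\neq 0$ (combined with distinctness) says exactly that $T$ meets each pair in at most one element, whereas a fixed point may be used freely subject only to distinctness. Encoding ``take none, or one of the two elements of a pair'' by the factor $(1+2z)$ and ``use a permitted fixed point or not'' by $(1+z)$, the number of admissible value sets of size $m$ is $[z^m](1+z)^{f}(1+2z)^{p}$, where $f$ is the number of fixed points the type allows in $T$; hence
\begin{align*}
\chi_S^{quasi}(q)=m!\,[z^m]\,(1+z)^{f}(1+2z)^{p}.
\end{align*}
Note that this automatically returns $0$ once $m$ exceeds the available capacity, matching the vanishing of the claimed products for small $q$.

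It then remains to insert the correct $f$ for each type and simplify. Type $C$ forbids every fixed point ($2x_i\neq 0$), so $f=0$ and $\chi_{C_m}^{quasi}(q)=m!\binom{p}{m}2^m=\prod_{i=0}^{m-1}(2p-2i)$, which equals $\prod_{i=1}^m(q-2i+1)$ for $q$ odd and $\prod_{i=1}^m(q-2i)$ for $q$ even. Type $B$ forbids only $0$, so $f=0$ for $q$ odd (reproducing the type-$C$ odd constituent) and $f=1$ for $q$ even; type $D$ permits all fixed points, so $f=1$ for $q$ odd and $f=2$ for $q$ even. In the remaining cases I would expand $(1+z)^f(1+2z)^p$, extract the coefficient of $z^m$, and collapse the resulting short binomial sums into the stated products. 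The main obstacle is precisely this algebraic step for $D_m$ with $q$ even: there $f=2$ produces the quadratic factor $q^2-2(m-1)q+m(m-1)$, which has discriminant $-4(m-1)<0$ and so is irreducible rather than a clean product of linear terms; I expect to handle it by grouping the three contributing binomial terms $\binom{p}{m},\binom{p}{m-1},\binom{p}{m-2}$ and factoring, or by a short induction on $m$. Finally, the periods follow by inspection of the constituents: each is constant within a parity class and the two parities give genuinely different polynomials for $B_m,C_m,D_m$, so the minimum period is $2$, in agreement with the lcm period through Theorem \ref{thm-lcm=minimum}, while $A_m$ has a single constituent and period $1$.
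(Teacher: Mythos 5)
Your argument is correct, and it reaches all four formulas by a route genuinely different from the paper's. The paper does not prove this theorem directly (it is quoted from Kamiya--Takemura--Terao) but re-derives it as the special cases $t=0$, $(r,t,s_i)=(0,m,1)$, $(m,m,2)$, $(0,0,-)$ of Theorems \ref{thm-charquaipoly-defomA} and \ref{thm-charquaipoly-defomD}, whose proofs count \emph{ordered} tuples coordinate by coordinate: at each step a lemma computes $\#\left(\{x\mid s_hx=0\}\cup\{\pm x_1,\dots,\pm x_{h-1}\}\right)$, and a case analysis tracks which coordinate (if any) carries $0$ or $q/2$. You instead count \emph{unordered} value sets via the orbit structure of the negation involution on $\mathbb{Z}_q$, packaging everything into $m!\,[z^m](1+z)^f(1+2z)^p$; the translation of the defining conditions, the values of $f$ and $p$ in each type and parity, and the final simplifications (including the grouping $(q-2m)(q-2m+2)+2m(q-2m+2)+m(m-1)=q^2-2(m-1)q+m(m-1)$ for $D_m$ with $q$ even, which is a one-line check) all come out as claimed, and the period statements follow since the two parity constituents are visibly distinct polynomials for $B_m$, $C_m$, $D_m$. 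Your generating-function formulation is more uniform and in the spirit of Blass--Sagan: one identity covers all four types and both parities, with no case split over the location of the self-negative elements. What the paper's sequential method buys in exchange is that it extends to the deformations $A_m(\bm{s})$ and $D_m(\bm{s})$, where the extra conditions $s_ix_i\neq 0$ are attached to \emph{specific} coordinates and so break the symmetry that your value-set count relies on; for the symmetric arrangements of this theorem your approach is the cleaner one.
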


\section{Characteristic quasi polynomials for deformation of Coxeter arrangements}\label{sec-char-poly-deform-Cox}
\subsection{Deformation of type A}
Let $t\in\mathbb{Z}_{\geq 0}$ with $0\leq t\leq m$, and let $s_1,\dots,s_t$ be positive integers with $s_t|\dots|s_1$.
We define a tuple $\bm{s}$ by $\bm{s}:=(s_1,\dots,s_t)$, and the arrangement $\mathcal{A}_m(\bm{s})$ by 
\begin{align*}
\mathcal{A}_m(\bm{s}):=\left\{\{s_ix_i=0\}\,\middle|\,1\leq i\leq t\right\}\cup\left\{\{x_i-x_j=0\}\,\middle|\,1\leq i<j\leq m\right\}.
\end{align*}
The corresponding matrix of $\mathcal{A}_m(\bm{s})$ is the $\displaystyle m\times\frac{m^2-m+2t}{2}$ matrix
\begin{align*}
\left(
\begin{array}{@{}cccc@{}}
s_1& \cdots&0&\\
\vdots  &\ddots&\vdots&\\
0&\cdots&s_t&A_m\\
\vdots  & &\vdots&\\
0&\cdots&0&
\end{array}
\right),
\end{align*}
where the matrix $A_m$ defines the arrangement $\mathcal{A}_m$.
We note that if $t=0$, then $\mathcal{A}_m(\bm{s})=\mathcal{A}_m$ and $\rho_{\mathcal{A}_m}=1$.
If $t\neq 0$, then $\rho_{\mathcal{A}_m(\bm{s})}=s_1$.
By Theorem \ref{thm-lcm=minimum}, the minimum period of $\chi^{quasi}_{\mathcal{A}_m(\bm{s})}(q)$ is
$
\begin{cases}
s_1\ &\text{if}\ t\neq 0,\\
1\ &\text{if}\ t=0.
\end{cases}
$ 

Let $k$ be a positive integer with $k|\rho_{\mathcal{A}_m(\bm{s})}$.
When $t\geq 1$, we define $d_i(k):=\gcd(k,s_i)$ for $1\leq i\leq t$.
We often omit $k$ and write $d_i=d_i(k)$ when $k$ is obvious.
For $q\in k+\rho_{\mathcal{A}_m(\bm{s})}\mathbb{Z}_{\geq 0}$, we have $d_i=\gcd(k,s_i)=\gcd(q,s_i)$  since $s_i|\rho_{\mathcal{A}_m(\bm{s})}$.
\begin{Lem}\label{lem-count-sx=0andx_i}
Let $t\geq 1$ and $k|\rho_{\mathcal{A}_m(\bm{s})}$. Let $q\in k+\rho_{\mathcal{A}_m(\bm{s})}\mathbb{Z}_{\geq 0}$ with $q>\max\{d_i(k)+i-1\mid 1\leq i\leq t\}$.
Let $h$ be an integer with $1\leq h\leq t$ and let $x_1,\dots,x_{h-1}\in\mathbb{Z}_q$ with
\begin{align}
s_ix_i\ne0 \quad&\text{for}\quad 1\leq i\leq h-1,\label{eq-count-sx=0}\\
x_i - x_j\ne0 \quad&\text{for}\quad 1\leq i<j \leq h-1.\label{eq-count-x_ineqx_j}
\end{align}
Then we have
\begin{align*}
\#\{x\in \mathbb{Z}_q\mid s_{h}x\neq 0\ \text{and}\ x\neq x_i\ \text{for any}\ 1\leq i\leq h-1\}=q-d_{h}(k)-h+1.
\end{align*}
\end{Lem}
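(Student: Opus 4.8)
The plan is to apply inclusion--exclusion to the two sets
$A := \{x \in \mathbb{Z}_q \mid s_h x = 0\}$ and $B := \{x_1, \dots, x_{h-1}\}$, after separately determining $|A|$ and $|B|$ and showing that the sets are disjoint.

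First I would count $|A|$. The equation $s_h x = 0$ in $\mathbb{Z}_q$ is the linear congruence $s_h x \equiv 0 \pmod q$, whose number of solutions is the standard count $\gcd(s_h, q)$. Since the hypothesis places $q$ in the residue class $k + \rho_{A_m(\bm{s})}\mathbb{Z}_{\geq 0}$ and $s_h \mid \rho_{A_m(\bm{s})}$, we have $\gcd(s_h, q) = \gcd(s_h, k) = d_h$, exactly as recorded just before the lemma. Hence $|A| = d_h$. Next, condition \eqref{eq-count-x_ineqx_j} guarantees that $x_1, \dots, x_{h-1}$ are pairwise distinct, so $|B| = h - 1$.

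The crux is to verify $A \cap B = \emptyset$, i.e.\ that $s_h x_i \neq 0$ for every $1 \leq i \leq h-1$. Here I would exploit the divisibility chain $s_t \mid \cdots \mid s_1$: for $i \leq h$ it gives $s_h \mid s_i$, say $s_i = c_i s_h$ with $c_i \in \mathbb{Z}$. Consequently, if some $x_i$ satisfied $s_h x_i = 0$, then $s_i x_i = c_i (s_h x_i) = 0$, contradicting \eqref{eq-count-sx=0}. Thus no $x_i$ lies in $A$, and the two sets are disjoint.

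Combining the three facts, inclusion--exclusion yields $\#(A \cup B) = |A| + |B| = d_h + (h-1)$, as claimed. I do not anticipate a genuine obstacle here; the only points requiring care are recording why $\gcd(s_h, q) = d_h$ and why the divisibility hypothesis $s_h \mid s_i$ (for $i < h$) transfers the nonvanishing in \eqref{eq-count-sx=0} from $s_i x_i$ to $s_h x_i$.
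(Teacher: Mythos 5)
Your proposal is correct and follows essentially the same route as the paper: inclusion--exclusion on the two sets, the count $\#\{x\in\mathbb{Z}_q\mid s_hx=0\}=\gcd(s_h,q)=d_h$, distinctness of the $x_i$ from \eqref{eq-count-x_ineqx_j}, and disjointness via the divisibility $s_h\mid s_i$ transferring $s_hx_i=0$ to $s_ix_i=0$, contradicting \eqref{eq-count-sx=0}. No issues.
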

\begin{proof}
The complement of $\{x\in \mathbb{Z}_q\mid s_{h}x=0\ \text{and}\ x\neq x_i\ \text{for any}\ 1\leq i\leq h-1\}$ is
\begin{align*}
\{x\in \mathbb{Z}_q\mid s_{h}x\neq 0\ \text{or}\ \text{there exists}\ i\ \text{with}\ 1\leq i\leq h-1\ \text{such that}\ x=x_i\}.
\end{align*}
By the inclusion--exclusion principle,
\begin{align*}
&\#\{x\in \mathbb{Z}_q\mid s_{h}x\neq 0\ \text{and}\ x\neq x_i\ \text{for any}\ 1\leq i\leq h-1\}\\
=&\,q-\#\{x\in \mathbb{Z}_q\mid s_{h}x=0\}-\#\{x\in \mathbb{Z}_q\mid 1\leq \exists i\leq h-1\ \text{s.t.}\ x=x_i\}\\
&\quad+\#\{x\in \mathbb{Z}_q\mid s_{h}x=0\ \text{and}\ 1\leq \exists i\leq h-1\ \text{s.t.}\ x=x_i\}.
\end{align*}
Since $\displaystyle \{x\in \mathbb{Z}_q\mid s_{h}x=0\}=\left\{0,\frac{q}{d_{h}},\frac{2q}{d_{h}},\ldots ,\frac{(d_{h}-1)q}{d_{h}}\right\}$, we have $\#\{x\in \mathbb{Z}_q\mid s_{h}x=0\}=d_{h}$.
In addition, clearly we have $\{x\in \mathbb{Z}_q\mid 1\leq \exists i\leq h-1\ \text{s.t.}\ x=x_i\}=\{x_1,\ldots ,x_{h-1}\}$.
By the condition \eqref{eq-count-x_ineqx_j}, the elements $x_1,\dots,x_h$ are different from each other.
So $\#\{x_1,\ldots ,x_{h-1}\}=h-1$.
To prove the assertion, it remains to prove that $\#\{x\in \mathbb{Z}_q\mid \mathbb{Z}_q\mid s_{h}x=0\ \text{and}\ 1\leq \exists i\leq h-1\ \text{s.t.}\ x=x_i\}=0$.
Assume that $\#\{x\in \mathbb{Z}_q\mid s_{h}x=0\ \text{and}\ 1\leq \exists i\leq h-1\ \text{s.t.}\ x=x_i\}\neq 0$.
Then there exists $i\leq h-1$ such that $s_{h}x_i=0$.
Since $s_h|s_i$, we can write $s_i=as_h$ for some $a\in\mathbb{Z}$.
Thus $s_ix_i=as_hx_i=0$.
This is in contradiction to the condition \eqref{eq-count-sx=0}.
Therefore we have $\#\{x\in \mathbb{Z}_q\mid s_{h}x\neq 0\ \text{and}\ x\neq x_i\ \text{for any}\ 1\leq i\leq h-1\}=q-d_{h}(k)-h+1$.
\end{proof}
We write $\displaystyle \prod_{i=a}^b f_i=1$ for integers $f_i\ (i\in\mathbb{Z})$ if $a>b$.
\begin{Them}\label{thm-charquaipoly-defomA}
Let $k|\rho_{\mathcal{A}_m(\bm{s})}$. Then we have
\begin{align}\label{eq-chi-quasi-Am}
\chi^k_{\mathcal{A}_m(\bm{s})}(q)=\prod_{i = 1}^{t}(q-d_i(k) - i+1)\prod_{i = t+1}^{m}(q-i+1).
\end{align}
\end{Them}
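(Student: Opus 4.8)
The plan is to compute $\chi^{quasi}_{A_m(\bm{s})}(q)=|M_{A_m(\bm{s})}(q)|$ by a direct sequential count using the multiplication principle. First I would unwind the definition of $M_{A_m(\bm{s})}(q)$: since the columns of $A_m(\bm{s})$ are $s_i\bm{e}_i$ $(1\le i\le t)$ together with $\bm{e}_i-\bm{e}_j$ $(1\le i<j\le m)$, an element $\bm{x}=(x_1,\dots,x_m)\in\mathbb{Z}_q^m$ lies in $M_{A_m(\bm{s})}(q)$ precisely when $s_ix_i\ne0$ in $\mathbb{Z}_q$ for every $1\le i\le t$ and $x_i\ne x_j$ in $\mathbb{Z}_q$ for all $1\le i<j\le m$. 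Thus the task reduces to counting tuples of pairwise distinct entries of $\mathbb{Z}_q$ subject to the additional $t$ annihilation constraints on the first $t$ coordinates. Throughout I would use that $d_i=\gcd(k,s_i)=\gcd(q,s_i)$ for $q\in k+\rho_{A_m(\bm{s})}\mathbb{Z}_{\ge0}$, as noted before the statement, so the formula is well defined.

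Next I would build such a tuple coordinate by coordinate, selecting $x_1,x_2,\dots,x_m$ in this order and counting, at each stage, the admissible values for the new coordinate. For a coordinate $x_h$ with $1\le h\le t$, the forbidden values are exactly those in $\{x\in\mathbb{Z}_q\mid s_hx=0\}\cup\{x_1,\dots,x_{h-1}\}$. Since the previously chosen $x_1,\dots,x_{h-1}$ already satisfy \eqref{eq-count-sx=0} and \eqref{eq-count-x_ineqx_j}, Lemma \ref{lem-count-sx=0andx_i} applies and shows this forbidden set has exactly $d_h+h-1$ elements, independently of the particular admissible values chosen earlier. Hence there are $q-(d_h+h-1)=q-d_h-h+1$ admissible choices for $x_h$. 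For a coordinate $x_h$ with $t+1\le h\le m$ there is no annihilation constraint, so the only forbidden values are the $h-1$ pairwise distinct entries $x_1,\dots,x_{h-1}$, yielding $q-(h-1)=q-h+1$ admissible choices.

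Because the number of admissible choices at each step depends only on $h$ (and on $d_h$), never on the specific earlier selections, the multiplication principle gives
\begin{align*}
|M_{A_m(\bm{s})}(q)|=\prod_{h=1}^{t}(q-d_h-h+1)\prod_{h=t+1}^{m}(q-h+1),
\end{align*}
which is the desired formula. The point I expect to require the most care is precisely this independence: the count of admissible $x_h$ must not depend on which valid tuple $(x_1,\dots,x_{h-1})$ has been assembled. For $h\le t$ this rests on the emptiness of the intersection $\{x\mid s_hx=0\}\cap\{x_1,\dots,x_{h-1}\}$, which in turn follows from the divisibility $s_h\mid s_i$ exploited in Lemma \ref{lem-count-sx=0andx_i}; for $h>t$ it is the plain distinctness of the earlier entries. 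Finally I would record the boundary case $t=0$, where both annihilation constraints are vacuous, the first product is empty and hence equals $1$ by the convention stated above, and the formula collapses to $\prod_{h=1}^{m}(q-h+1)$, recovering \eqref{eq-chi-quasi-coxA}.
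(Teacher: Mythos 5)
Your proposal is correct and follows essentially the same route as the paper's proof: a coordinate-by-coordinate count in the order $x_1,\dots,x_m$, invoking Lemma \ref{lem-count-sx=0andx_i} to show the forbidden set for $x_h$ with $h\le t$ has exactly $d_h+h-1$ elements independently of the earlier choices, and using plain distinctness for $h>t$. The only cosmetic difference is that the paper dispatches the case $t=0$ by citing Theorem \ref{thm-cox-chrquaipoly-ONB}, whereas you observe (correctly) that the same direct count already covers it.
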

\begin{proof}
If $t=0$, then $\rho_{\mathcal{A}_m(\bm{s})}=1$. So $k=1$ and $q\in\mathbb{Z}_{>0}$.
The assertion follows from Theorem \ref{thm-cox-chrquaipoly-ONB}.

Let $t\geq 1$.
Let $q\in k+\rho_{\mathcal{A}_m(\bm{s})}\mathbb{Z}_{\geq 0}$ with $q>\max(\{m-1\}\cup\{d_i(k)+i-1\mid 1\leq i\leq t\})$.
The value $\chi^k_{A_m(\bm{s})}(q)=|M_{A_m(\bm{s})}(q)|$ is the number of elements $\bm{x}=(x_1,\ldots ,x_m)\in\mathbb{Z}_q^m$ with
\begin{align}
s_ix_i\ne0,\ &\text{for}\ 1\leq i\leq t,\label{eq-chi_Am-condi1}\\
x_i - x_j\ne0,\ &\text{for}\ 1\leq i < j \leq m.\label{eq-chi_Am-condi2}
\end{align}
We enumerate the elements $x_i\in\mathbb{Z}_q$ $(1\leq i\leq m)$ from the first entry $x_1$ to the last entry $x_m$ satisfying the conditions \eqref{eq-chi_Am-condi1} and \eqref{eq-chi_Am-condi2}.
We first consider $x_1$.
Since $\#\{x\in\mathbb{Z}_q\mid s_1x=0\}=\gcd(q,s_1)=d_1$, the number of elements $x_1\in\mathbb{Z}_q$ with $s_1x_1\neq 0$ is $q-d_1$.
Next let $2\leq h\leq t$, and we take $x_1,\dots,x_{h-1}\in\mathbb{Z}_q$ with
\begin{align*}
s_ix_i\ne0,\quad&\text{for}\quad 1\leq i\leq h-1,\\
x_i - x_j\ne 0\quad&\text{for}\quad 1\leq i < j \leq h-1.
\end{align*}
Then $x_1,\dots, x_h$ satisfy the conditions \eqref{eq-chi_Am-condi1} and \eqref{eq-chi_Am-condi2} if and only if $s_hx_h\neq 0$ and $x_h\neq x_l$ for any $1\leq l<h$.
By Lemma \ref{lem-count-sx=0andx_i}, after taking $x_1,\dots,x_{h-1}$, the number of way to take such $x_h\in\mathbb{Z}_q$ is $q-d_h-h+1$.
Therefore the number of elements $(x_1,\dots,x_t)\in\mathbb{Z}_q^{t}$ with the conditions \eqref{eq-chi_Am-condi1} and \eqref{eq-chi_Am-condi2} is
\begin{align*}
\prod_{i = 1}^{t}(q-d_i - i+1)=(q-d_1)(q-d_2-1)\cdots(q-d_t-t+1).
\end{align*}

Finally let $t<h\leq m$, and let us take $x_1,\dots,x_{h-1}$ satisfying the conditions \eqref{eq-chi_Am-condi1} and \eqref{eq-chi_Am-condi2}.
In this case, we can take any element avoiding $x_1,\dots.x_{h-1}$ as $x_h$.
So the number of elements $x_h\in\mathbb{Z}_q$ with the conditions \eqref{eq-chi_Am-condi1} and \eqref{eq-chi_Am-condi2} is $q-h+1$.
Hence we have $\chi^k_{A_m(\bm{s})}(q)=\prod_{i = 1}^{t}(q-d_i - i+1)\prod_{i = t+1}^{m}(q-i+1)$.
\end{proof}
\begin{Examp}
Let $t=0$. Then $\mathcal{A}_m(\bm{s})=\mathcal{A}_m$ and $\rho_{\mathcal{A}_m}=1$.
If $k|\rho_{\mathcal{A}_m}$, then $k=1$.
By substituting $t=0$ to the equality \eqref{eq-chi-quasi-Am}, we have
\begin{align*}
\chi^1_{\mathcal{A}_m}(q)=\prod_{i = 1}^{m}(q-i+1),
\end{align*}
and it coincides with the equality \eqref{eq-chi-quasi-coxA}.
\end{Examp}

\subsection{Deformation of type D}
Let $r,t\in\mathbb{Z}_{\geq 0}$ with $0\leq r\leq t\leq m$.
We take positive integers $s_1,\dots,s_t$, where $s_1,\dots,s_r$ are even, $s_{r+1},\dots,s_t$ are odd, and $s_t|\dots|s_r|\dots|s_1$.
Let $\bm{s}:=(s_1,\dots,s_t)$.
Then the arrangement $\mathcal{D}_m(\bm{s})$ is defined by 
\begin{align*}
\mathcal{D}_m(\bm{s}):=\left\{\{s_ix_i=0\}\,\middle|\,1\leq i\leq t\right\}\cup\left\{\{x_i-x_j=0\},\{x_i+x_j=0\}\,\middle|\,1\leq i<j\leq m\right\}.
\end{align*}
The corresponding matrix of $\mathcal{D}_m(\bm{s})$ is the $\displaystyle m\times(m^2-m+t)$ matrix
\begin{align*}
\left(
\begin{array}{@{}cccc@{}}
s_1& \cdots&0&\\
\vdots  &\ddots&\vdots&\\
0&\cdots&s_t&D_m\\
\vdots  & &\vdots&\\
0&\cdots&0&
\end{array}
\right),
\end{align*}
where the matrix $D_m$ defines the arrangement $\mathcal{D}_m$.
The class of arrangements represented by $\mathcal{D}_m(\bm{s})$ contains Coxeter arrangements $\mathcal{B}_m$, $\mathcal{C}_m$, and $\mathcal{D}_m$:
\begin{align*}
r=0,\ t=m,\ \text{and}\ s_i=1\ (i=1,\dots,t)\quad&\Rightarrow\quad \mathcal{D}_m(\bm{s})=\mathcal{B}_m,\\
r=t=m\ \text{and}\ s_i=2\ (i=1,\dots,t)\quad&\Rightarrow\quad \mathcal{D}_m(\bm{s})=\mathcal{C}_m,\\
r=t=0\quad&\Rightarrow\quad \mathcal{D}_m(\bm{s})=\mathcal{D}_m.
\end{align*}
By Theorem \ref{thm-lcm=minimum}, the minimum period of $\chi_{\mathcal{D}_m(\bm{s})}^{quasi}(q)$ is $\rho_{\mathcal{D}_m(\bm{s})}=
\begin{cases}
\lcm(s_1,2)\ &\text{if}\ t\neq 0,\\
2\ &\text{if}\ t=0.
\end{cases}$

Let $k$ be a positive integer with $k|\rho_{\mathcal{D}_m(\bm{s})}$ and let $q\in k+\rho_{\mathcal{D}_m(\bm{s})}\mathbb{Z}_{\ge0}$.
When $t\geq 1$, we define $d_i=d_i(k):=gcd(k,s_i)$ for $1\leq i\leq t$.
Then $d_i=gcd(k,s_i)=gcd(q,s_i)$ since $s_i|\rho_{\mathcal{D}_m(\bm{s})}$.
In addition, since $\rho_{\mathcal{D}_m(\bm{s})}$ is even, $k$ is even if and only if $q$ is even.
For $a,b\in\mathbb{Z}_q$, we denote $a\in\{b,-b\}$ by $a=\pm b$ for $a,b\in\mathbb{Z}_q$ while $a\not\in\{b,-b\}$ by $a\neq \pm b$.
\begin{Lem}\label{lem-count-sx=0andx_i-D}
Let $t\geq 1$ and $k|\rho_{\mathcal{D}_m(\bm{s})}$.
Let $q\in k+\rho_{\mathcal{D}_m(\bm{s})}\mathbb{Z}_{\geq 0}$ with $q>\max\{d_i(k)+2i-2\mid 1\leq i\leq t\}$.
Let $h$ be an integer with $1\leq h\leq t$ and let $x_1,\dots,x_{h-1}\in\mathbb{Z}_q$ with
\begin{align}
s_ix_i\ne0 \quad&\text{for}\quad 1\leq i\leq h-1,\label{eq-count-sx=0-D}\\
x_i - x_j\ne0 \quad&\text{for}\quad 1\leq i<j \leq h-1,\label{eq-count-x_ineqx_j-D}\\
x_i + x_j\ne0 \quad&\text{for}\quad 1\leq i<j \leq h-1.\label{eq-count-x_i+x_jneq-D}
\end{align}
In addition, if $k$ is even and $r+1\leq h-1$, then we assume that $\displaystyle x_i\neq q/2$ for $r+1\leq i\leq h-1$.
Then we have
\begin{align*}
\#\{x\in \mathbb{Z}_q\mid s_{h}x\neq 0\ \text{and}\ x\neq \pm x_i\ \text{for}\ 1\leq i\leq h-1\}=q-d_{h}(k)-2h+2.
\end{align*}
\end{Lem}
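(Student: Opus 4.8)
The plan is to mirror the inclusion--exclusion argument of Lemma \ref{lem-count-sx=0andx_i}, but with the singleton-based set replaced by the symmetric set $V:=\{x_i,\,-x_i\mid 1\leq i\leq h-1\}$. Writing $U:=\{x\in\mathbb{Z}_q\mid s_hx=0\}$, the inclusion--exclusion principle gives
\[
\#(U\cup V)=\#U+\#V-\#(U\cap V),
\]
so it suffices to establish the three facts $\#U=d_h$, $\#V=2h-2$, and $U\cap V=\emptyset$. The first is immediate exactly as in Lemma \ref{lem-count-sx=0andx_i}: since $\gcd(q,s_h)=d_h$ we have $U=\{0,q/d_h,\dots,(d_h-1)q/d_h\}$, hence $\#U=d_h$.

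The main new ingredient is the count $\#V=2h-2$, i.e. that the listed elements $x_1,\dots,x_{h-1},-x_1,\dots,-x_{h-1}$ are pairwise distinct. Distinctness of the $x_i$ among themselves (and hence of the $-x_i$ among themselves) is \eqref{eq-count-x_ineqx_j-D}, while $x_i\neq-x_j$ for $i\neq j$ is \eqref{eq-count-x_i+x_jneq-D}. The delicate point, which I expect to be the real content, is ruling out $x_i=-x_i$, equivalently $2x_i=0$. Here I would split on the parity of $k$, which equals that of $q$ because $\rho_{D_m(\bm{s})}$ is even. When $k$ is odd, $q$ is odd and $2$ is a unit in $\mathbb{Z}_q$, so $2x_i=0$ forces $x_i=0$, excluded by \eqref{eq-count-sx=0-D}. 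When $k$ is even, $q$ is even and $2x_i=0$ forces $x_i\in\{0,q/2\}$; again $x_i=0$ is excluded by \eqref{eq-count-sx=0-D}.

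For the remaining possibility $x_i=q/2$ I would exploit the parity of $s_i$. If $1\leq i\leq r$ then $s_i$ is even and $s_i\cdot\frac{q}{2}=\frac{s_i}{2}\,q\equiv0$ in $\mathbb{Z}_q$, contradicting \eqref{eq-count-sx=0-D}, so $x_i\neq q/2$ automatically. If $r+1\leq i\leq h-1$ then $s_i$ is odd and $s_i\cdot\frac{q}{2}\equiv\frac{q}{2}\neq0$, so the hypothesis $s_ix_i\neq0$ yields no contradiction; this is precisely the case that forces the extra assumption $x_i\neq q/2$ to appear in the statement. Combining these cases shows $x_i\neq-x_i$ for every $i$, and hence $\#V=2h-2$.

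Finally, for $U\cap V=\emptyset$ I would argue as in the type A lemma: since $s_h(-x_i)=-s_hx_i$, it is enough to show $s_hx_i\neq0$ for all $i<h$. For such $i$ the divisibility chain $s_t\mid\cdots\mid s_1$ gives $s_h\mid s_i$, say $s_i=\ell s_h$; then $s_hx_i=0$ would yield $s_ix_i=\ell s_hx_i=0$, contradicting \eqref{eq-count-sx=0-D}. Hence $U\cap V=\emptyset$, and substituting into the inclusion--exclusion formula gives $\#(U\cup V)=d_h+(2h-2)-0=d_h+2h-2$, as claimed.
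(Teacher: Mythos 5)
Your proposal is correct and follows essentially the same route as the paper: inclusion--exclusion with $\#U=d_h$, the divisibility chain $s_h\mid s_i$ forcing $U\cap V=\emptyset$, and the same parity case study (on $k$ and on whether $i\leq r$) to rule out $x_i=-x_i$. The only difference is that you spell out the $U\cap V=\emptyset$ step and the role of the hypothesis $x_i\neq q/2$ a bit more explicitly than the paper, which simply refers back to Lemma \ref{lem-count-sx=0andx_i}.
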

\begin{proof}
By the inclusion--exclusion principle,
\begin{align*}
&\#\{x\in \mathbb{Z}_q\mid s_{h}x\neq 0\ \text{and}\ x\neq \pm x_i\ \text{for}\ 1\leq i\leq h-1\}\\
=&\,q-\#\{x\in \mathbb{Z}_q\mid s_{h}x= 0\ \text{or}\ 1\leq \exists i\leq h-1 x= \pm x_i\}\\
=&\,q-\#\{x\in \mathbb{Z}_q\mid s_{h}x=0\}-\#\{x\in \mathbb{Z}_q\mid 1\leq \exists i\leq h-1\ \text{s.t.}\ x= \pm x_i\}\\
&\quad +\#\{x\in \mathbb{Z}_q\mid s_{h}x=0\ \text{and}\ 1\leq \exists i\leq h-1\ \text{s.t.}\ x= \pm x_i\}.
\end{align*}
Here we have $\#\{x\in \mathbb{Z}_q\mid s_{h}x=0\}=d_h$ and $\#\{x\in \mathbb{Z}_q\mid s_{h}x=0\ \text{and}\ 1\leq \exists i\leq h-1\ \text{s.t.}\ x= \pm x_i\}=0$ similarly to Lemma \ref{lem-count-sx=0andx_i}.
It remains to prove the assertion that $\#\{x\in \mathbb{Z}_q\mid s_{h}x=0\ \text{and}\ 1\leq \exists i\leq h-1\ \text{s.t.}\ x= \pm x_i\}=\#\{x_i,-x_i\mid 1\leq i\leq h-1\}=2h-2$.
By the conditions \eqref{eq-count-x_ineqx_j-D} and \eqref{eq-count-x_i+x_jneq-D}, to prove $\#\{x_i,-x_i\mid 1\leq i\leq h-1\}=2h-2$, we may prove that $x_i\neq -x_i$ for $1\leq i\leq h-1$. Let $1\leq i\leq h-1$.
\begin{itemize}
\item When $k$ is odd ($\Leftrightarrow$ $q$ is odd), it holds that $\{x\in\mathbb{Z}_q\mid x=-x\}=\{0\}$.
By the condition \eqref{eq-count-sx=0-D}, we have $x_i\neq 0$. Hence $x_i\neq -x_i$.
\item When $k$ is even ($\Leftrightarrow$ $q$ is even) and $i\leq r$, it holds that $\displaystyle \{x\in\mathbb{Z}_q\mid x=-x\}=\left\{0,q/2\right\}$ while $s_i$ is even.
If $x_i=0$, then $s_ix_i=0$.
If $\displaystyle x_i=q/2$, then $\displaystyle s_ix_i=q\frac{s_i}{2}=0$.
These are in contradiction to the condition \eqref{eq-count-sx=0-D}. Therefore $x_i\neq -x_i$.
\item When $k$ is even ($\Leftrightarrow$ $q$ is even) and $r+1\leq i\leq h-1$, we have $x_i\neq -x_i$ by the assumption $\displaystyle x_i\neq q/2$ for $r+1\leq i\leq h-1$ and the condition \eqref{eq-count-sx=0-D}.
\end{itemize}
Thus we have $\#\{x_i,\ -x_i\mid 1\leq i\leq h-1\}=2h-2$, and hence $\#\{x\in \mathbb{Z}_q\mid s_{h}x\neq 0\ \text{and}\ x\neq \pm x_i\ \text{for}\ 1\leq i\leq h-1\}=q-d_h-2h+2$.
\end{proof}
We write $\displaystyle \sum_{i=a}^b f_i=0$ and $\displaystyle \prod_{i=a}^b f_i=1$ for integers $f_i\ (i\in\mathbb{Z})$ if $a>b$.
\begin{Them}\label{thm-charquaipoly-defomD}
Let $k|\rho_{\mathcal{D}_m(\bm{s})}$.
If $k$ is odd, then
\begin{align}\label{eq-chi-quasi-Dm-odd}
\chi^k_{\mathcal{D}_m(\bm{s})}(q)=\prod_{i = 1}^{t}(q-d_i(k)-2i+2)\left(\prod_{i = t+1}^m(q-2i+1)+(m-t)\prod_{i= t+1}^{m-1}(q-2i+1)\right).
\end{align}
If $k$ is even, then
\begin{align}\label{eq-chi-quasi-Dm-even}
\chi^k_{\mathcal{D}_m(\bm{s})}(q)=\prod_{i=1}^{r}(q-d_i(k)-2i+2)(P_1P_2+ P_3P_4),
\end{align}
where
\begin{align*}
P_1&:=\prod_{i=r+1}^{t}(q-d_i(k)-2i+1),\\
P_2&:=\prod_{i=t+1}^{m}(q-2i)+2(m-t)\prod_{i=t+1}^{m-1}(q-2i)+(m-t)(m-t-1)\prod_{i=t+1}^{m-2}(q-2i),\\
P_3&:=\sum_{i=r+1}^{t}\prod_{j=1}^{i-1}(q-d_j(k)-2j+1)\prod_{j=i+1}^{t}(q-d_j(k)-2j+3),\\
P_4&:=\prod_{i=t+1}^{m}(q-2i+2)+(m-t)\prod_{i=t+1}^{m-1}(q-2i+2).
\end{align*}

In particular, when $t=m$ and $\displaystyle s_{r+1}=s_{r+2}=\cdots=s_m$, the $k$-constituents are described as follows.
If $k$ is odd, then
\begin{align*}
\chi_{\mathcal{D}_{m}(\bm{s})}^k(q)=\prod_{i=1}^{r}(q-d_i(k)-2i+2)\prod_{i=r+1}^{m}(q-d(k)-2i+2).
\end{align*}
If $k$ is even, then
\begin{align*}
\chi_{\mathcal{D}_{m}(\bm{s})}^k(q)&=
\begin{cases}
\prod_{i=1}^{m}(q-d_i(k)-2i+2)&\ (r=m),\\
(q-d(k)-m-r+1)\prod_{i=1}^{r}(q-d_i(k)-2i+2)\prod_{i=r+1}^{m-1}(q-d(k)-2i+1)&\ (r<m),
\end{cases}
\end{align*}
where we write $d(k):=d_{r+1}(k)=d_{r+2}(k)=\cdots=d_{m}(k)$
\end{Them}
\begin{proof}
Let $q\in k+\rho_{\mathcal{D}_m(\bm{s})}\mathbb{Z}_{\geq 0}$ with $q>\max(\{2m\}\cup\{d_i(k)+2i-1\mid 1\leq i\leq t\})$.
The value $\chi_{\mathcal{D}_{m}(\bm{s})}^k(q)$ is the number of elements $\bm{x}=(x_1,\dots,x_m)\in\mathbb{Z}_q^m$ with
\begin{align}
s_ix_i\ne0 \quad&\text{for}\ 1\leq i\leq t,\label{eq-condi-M(A)-D1}\\
x_i - x_j\ne0 \quad&\text{for}\ 1\leq i<j \leq m,\label{eq-condi-M(A)-D2}\\
x_i + x_j\ne0 \quad&\text{for}\ 1\leq i<j \leq m.\label{eq-condi-M(A)-D3}
\end{align}
To prove the assertion, we enumerate the elements $x_i\in\mathbb{Z}_q$ $(1\leq i\leq m)$ with the conditions \eqref{eq-condi-M(A)-D1}, \eqref{eq-condi-M(A)-D2}, and \eqref{eq-condi-M(A)-D3} from the first entry $x_1$ to the last entry $x_m$.

First let $k$ be odd.
By Lemma \ref{lem-count-sx=0andx_i-D}, the number of elements $(x_1,\dots,x_t)\in\mathbb{Z}_q^t$ with the conditions \eqref{eq-condi-M(A)-D1}, \eqref{eq-condi-M(A)-D2}, and \eqref{eq-condi-M(A)-D3} is
\begin{align*}
(q-d_1)(q-d_2-2)\cdots(q-d_t-2t+2)=\prod_{i=1}^t(q-d_i-2i+2).
\end{align*}
Here we fix $(x_1,\dots,x_t)$.
The condition \eqref{eq-condi-M(A)-D1} is irrelevant for $x_{t+1},\dots,x_m$.
We determine the number of elements $(x_{t+1},\dots,x_m)\in\mathbb{Z}_q^{m-t}$ with the conditions \eqref{eq-condi-M(A)-D2} and \eqref{eq-condi-M(A)-D3} by dividing the following cases.
\begin{itemize}
\item When $x_i\neq 0$ for all $t+1\leq i\leq m$, the number of $(x_{t+1},\dots,x_m)\in\mathbb{Z}_q^{m-t}$ with the conditions \eqref{eq-condi-M(A)-D2} and \eqref{eq-condi-M(A)-D3} is
$\displaystyle (q-1-2t)(q-1-2t+2)\cdots(q-1-2m+2)=\prod_{i=t+1}^m(q-2i+1)$.
\item When there exists $t+1\leq i\leq m$ such that $x_i= 0$, such $i$ is unique by the condition \eqref{eq-condi-M(A)-D2} or \eqref{eq-condi-M(A)-D3}.
We fix an index $i$ with $x_i=0$.
Then the number of elements $(x_{t+1},\dots,x_{i-1},x_{i+1},\dots,x_m)\in\mathbb{Z}_q^{m-t-1}$ with the conditions \eqref{eq-condi-M(A)-D2} and \eqref{eq-condi-M(A)-D3} is $\displaystyle (q-1-2t)(q-1-2t+2)\cdots(q-1-2m+4)$.
Since there are $(m-t)$ ways to select such index $i$, the number of $(x_{t+1},\dots,x_m)\in\mathbb{Z}_q^{m-t}$ with the conditions\eqref{eq-condi-M(A)-D2} and \eqref{eq-condi-M(A)-D3} is $\displaystyle (m-t)(q-1-2t)(q-1-2t+2)\cdots(q-1-2m+4)=(m-t)\prod_{i=t+1}^{m-1}(q-2i+1)$.
\end{itemize}
From the argument above, we have
\begin{align*}
\chi^k_{\mathcal{D}_m(\bm{s})}(q)=\prod_{i = 1}^{t}(q-d_i-2i+2)\left(\prod_{i = t+1}^m(q-2i+1)+(m-t)\prod_{i= t+1}^{m-1}(q-2i+1)\right).
\end{align*}

Next let $k$ be even. We recall that $q$ is even in this case.
By Lemma \ref{lem-count-sx=0andx_i-D}, the number of elements $(x_1,\dots,x_r)\in\mathbb{Z}_q^r$ with the conditions \eqref{eq-condi-M(A)-D1}, \eqref{eq-condi-M(A)-D2}, and \eqref{eq-condi-M(A)-D3} is 
\begin{align*}
(q-d_1)(q-d_2-2)\cdots(q-d_r-2r+2)=\prod_{i=1}^r(q-d_i-2i+2).
\end{align*}
We fix $(x_1,\dots,x_r)$.
Then we enumerate elements $(x_{r+1},\dots,x_m)\in\mathbb{Z}_q^{m-r}$ with the conditions \eqref{eq-condi-M(A)-D1}, \eqref{eq-condi-M(A)-D2}, and \eqref{eq-condi-M(A)-D3} by dividing several cases.
We note that $\displaystyle x_1\not\in\left\{0,q/2\right\},\dots,x_r\not\in\left\{0,q/2\right\}$ and $x_{r+1}\neq 0,\dots,x_{t}\neq 0$.

(i) We consider the case when $\displaystyle q/2$ is not contained in $\{x_{r+1},\dots,x_{t}\}$.
For $r+1\leq i\leq t$, there are $q-1-d_i$ ways to take $x_i\in\mathbb{Z}_q$ with $s_ix_i\neq 0$ and $x_i\neq q/2$.
In addition, the conditions \eqref{eq-condi-M(A)-D2} and \eqref{eq-condi-M(A)-D3} prohibits $2i-2$ ways to take $x_i\in\mathbb{Z}_q$.
So the number of elements $(x_{r+1},\dots,x_{t})$ with the conditions \eqref{eq-condi-M(A)-D1}, \eqref{eq-condi-M(A)-D2}, and \eqref{eq-condi-M(A)-D3} is
\begin{align*}
(q-1-d_{r+1}-2r)(q-1-d_{r+2}-2r-2)\cdots(q-1-d_t-2t+2)=\prod_{i=r+1}^t(q-d_i-2i+1).
\end{align*}
Then after fixing $(x_{r+1},\dots,x_t)$, we determine the number of elements $(x_{t+1},\dots,x_{m})$ with the conditions \eqref{eq-condi-M(A)-D2} and \eqref{eq-condi-M(A)-D3} since the condition \eqref{eq-condi-M(A)-D1} is irrelevant for $x_{t+1},\dots,x_m$.
\begin{itemize}
\item[(i-a)] When both $0$ and $\displaystyle q/2$ are not contained in $\{x_{t+1},\dots,x_m\}$, we can take $x_i$ from $\displaystyle \mathbb{Z}_q\setminus\left\{0,q/2,\pm x_1,\dots,\pm x_{i-1}\right\}$ for $t+1\leq i\leq m$. Therefore the number of elements $(x_{t+1},\dots,x_{m})$ with the conditions \eqref{eq-condi-M(A)-D2} and \eqref{eq-condi-M(A)-D3} is $\displaystyle \prod_{i=t+1}^m(q-2-2i+2)=\prod_{i=t+1}^m(q-2i)$.
\item[(i-b)] When exactly one of $\displaystyle\{0,q/2\}$ is contained in $\{x_{t+1},\dots,x_m\}$, we select an index $j$ from $\{t+1,\dots,m\}$ so that $\displaystyle x_j\in\left\{0,q/2\right\}$.
There are $(m-t)$ ways to select such index $j$.
We may assume that $\displaystyle x_m\in\left\{0,q/2\right\}$ after selecting such index.
For $t+1\leq i\leq m-1$, the $i$-th entry $x_{i}$ can be taken from $\displaystyle \mathbb{Z}_q\setminus\left\{0,q/2,\pm x_1,\dots,\pm x_{i-1}\right\}$.
Hence the number of elements $(x_{t+1},\dots,x_{m})$ with the conditions \eqref{eq-condi-M(A)-D2} and \eqref{eq-condi-M(A)-D3} is $\displaystyle 2(m-t)\prod_{i=t+1}^{m-1}(q-2-2i+2)=2(m-t)\prod_{i=t+1}^{m-1}(q-2i)$.
\item[(i-c)] When both $0$ and $\displaystyle q/2$ are contained in $\{x_{t+1},\dots,x_m\}$, there are $(m-t)(m-t-1)$ ways to select indices $j,l$ with $\displaystyle x_j=0,\ x_l=q/2$.
Similarly to the case (i-b), the number of elements $(x_{t+1},\dots,x_{m})$ with the conditions \eqref{eq-condi-M(A)-D2} and \eqref{eq-condi-M(A)-D3} is $\displaystyle (m-t)(m-t-1)\prod_{i=t+1}^{m-2}(q-2-2i+2)=(m-t)(m-t-1)\prod_{i=t+1}^{m-2}(q-2i)$.
\end{itemize}
Hence in the case (i), the number of elements $(x_{r+1},\dots,x_{m})$ with the conditions \eqref{eq-condi-M(A)-D1}, \eqref{eq-condi-M(A)-D2}, and \eqref{eq-condi-M(A)-D3} is $P_1P_2$.

(i\hspace{-0.5mm}i) When $\displaystyle q/2$ is contained in $\{x_{r+1},\dots,x_{t}\}$, the index $i$ with $\displaystyle x_i=q/2$ is unique by the conditions \eqref{eq-condi-M(A)-D2} and \eqref{eq-condi-M(A)-D3}.
If $r+1\leq j\leq i-1$, then we can take $\displaystyle x_j\in \mathbb{Z}_q\setminus\left\{q/2\right\}$ with $\displaystyle s_jx_j\neq 0$, $x_j\neq \pm x_1,\ \dots,\ x_{j}\neq\pm x_{j-1}$.
If $i+1\leq j\leq t$, then we can take $\displaystyle x_j\in\mathbb{Z}_q\setminus\left\{q/2\right\}$ with $s_jx_j\neq 0$, $x_j\neq \pm x_1$, $\ldots$, $x_j\neq \pm x_{i-1}$, $x_j\neq \pm x_{i+1}$, $\ldots$, $x_{j}\neq\pm x_{j-1}$.
Therefore by Lemma \ref{lem-count-sx=0andx_i-D}, the number of elements $(x_{r+1},\dots,x_{t})$ with the conditions \eqref{eq-condi-M(A)-D1}, \eqref{eq-condi-M(A)-D2}, and \eqref{eq-condi-M(A)-D3} is
\begin{align*}
&\sum_{i=r+1}^{t}\prod_{j=r+1}^{i-1}(q-1-d_j-2j+2)\prod_{j=i+1}^{t}(q-1-d_j-2j+4)\\
=\,&\sum_{i=r+1}^{t}\prod_{j=r+1}^{i-1}(q-d_j-2j+1)\prod_{j=i+1}^{t}(q-d_j-2j+3).
\end{align*}
Furthermore after fixing $(x_{r+1},\dots,x_t)$, we enumerate elements $(x_{t+1},\dots,x_{m})$ with the conditions \eqref{eq-condi-M(A)-D2} and \eqref{eq-condi-M(A)-D3}.
We note that $\displaystyle x_{t+1}\neq q/2,\dots,x_m\neq q/2$ since $\displaystyle q/2\in\left\{x_{r+1},\dots,x_{t}\right\}$.
Let us fix the index $r+1\leq l\leq t$ with $\displaystyle x_l=q/2$.
\begin{itemize}
\item[(i\hspace{-0.5mm}i-a)] When $0\not\in\{x_{t+1},\dots,x_m\}$, we can take $x_i$ from $\displaystyle \mathbb{Z}_q\setminus\{0,q/2,\pm x_1,\dots,\pm x_{l-1},\pm x_{l+1},\dots,\pm x_{i-1}\}$ for $t+1\leq i\leq m$.
Therefore the number of elements $(x_{t+1},\dots,x_{m})$ with the conditions \eqref{eq-condi-M(A)-D2} and \eqref{eq-condi-M(A)-D3} is $\displaystyle \prod_{i=t+1}^m(q-2-2i+4)=\prod_{i=t+1}^m(q-2i+2)$.
\item[(i\hspace{-0.5mm}i-b)] When $0\in\{x_{t+1},\dots,x_m\}$, there are $(m-t)$ ways to select an index $t+1\leq j\leq m$ with $x_j=0$.
We may assume that $\displaystyle x_m=0$.
Then we can take $x_i$ from $\displaystyle \mathbb{Z}_q\setminus\{0,q/2,\pm x_1,\dots,\pm x_{l-1},\pm x_{l+1},\dots,\pm x_{i-1}\}$ for $t+1\leq i\leq m-1$.
Therefore the number of elements $(x_{t+1},\dots,x_{m})$ with the conditions \eqref{eq-condi-M(A)-D2} and \eqref{eq-condi-M(A)-D3} is $\displaystyle (m-t)\prod_{i=t+1}^{m-1}(q-2-2i+4)=(m-t)\prod_{i=t+1}^{m-1}(q-2i+2)$.
\end{itemize}
In the case (i\hspace{-0.5mm}i), the number of elements $(x_{r+1},\dots,x_{m})$ with the conditions \eqref{eq-condi-M(A)-D1}, \eqref{eq-condi-M(A)-D2}, and \eqref{eq-condi-M(A)-D3} is $P_3P_4$.
Hence if $k$ is even, then we have
\begin{align*}
\chi^k_{\mathcal{D}_m(\bm{s})}(q)=\prod_{i=1}^{r}(q-d_i-2i+2)(P_1P_2+ P_3P_4).
\end{align*}

Finally, we assume $t=m$ and we write $d:=d_{r+1}=d_{r+2}=\cdots=d_m$.
If $k$ is odd, then clearly $\chi_{D_{m}(\bm{s})}^k(q)=\prod_{i=1}^{r}(q-d_i-2i+2)\prod_{i=r+1}^{m}(q-d-2i+2)$.
If $k$ is even, then
\begin{align*}
&P_1P_2+ P_3P_4=P_1+P_3\\
=&\,\prod_{i=r+1}^{m}(q-d-2i+1)+\sum_{i=r+1}^{m}\prod_{j=1}^{i-1}(q-d-2j+1)\prod_{j=i+1}^{m}(q-d-2j+3)\\
=&\,\prod_{i=r+1}^{m}(q-d-2i+1)+\sum_{i=r+1}^{m}\prod_{j=1}^{m-1}(q-d-2j+1)\\
=&\,\prod_{i=r+1}^{m}(q-d-2i+1)+(m-r)\prod_{i=1}^{m-1}(q-d-2i+1)\\
=&\,
\begin{cases}
1&\ (r=m),\\
(q-d-m-r+1)\prod_{i=r+1}^{m-1}(q-d-2i+1)&\ (r<m).
\end{cases}
\end{align*}
\end{proof}
All $k$-constituents for $\chi^{quasi}_{\mathcal{D}_m(\bm{s})}(q)$ are factorized if $t=m$ and $\displaystyle s_{r+1}=s_{r+2}=\cdots=s_m$.
In addition. the characteristic quasi-polynomials for $\mathcal{B}_m$, $\mathcal{C}_m$, and $\mathcal{D}_m$ are computed from Theorem \ref{thm-charquaipoly-defomD}.
\begin{Examp}
$(1)$ Let $r=0$, $t=m$, and $s_1=\cdots=s_m=1$.
We have that $\mathcal{D}_m(\bm{s})=\mathcal{B}_m$, $\rho_{\mathcal{B}_m}=\lcm(s_1,2)=2$, and $d_1=\cdots=d_m=1$. Then
\begin{align*}
\chi^1_{B_m}(q)&=\prod_{i = 1}^{m}(q-1-2i+2)=\prod_{i = 1}^{m}(q-2i+1),\\
\chi^2_{B_m}(q)&=(q-1-m-0+1)\prod_{i = 1}^{m-1}(q-1-2i+1)=(q-m)\prod_{i = 1}^{m-1}(q-2i).
\end{align*}

$(2)$ Let $r=t=m$, and $s_1=\cdots=s_m=2$.
We have $\mathcal{D}_m(\bm{s})=\mathcal{C}_m$ and $\rho_{\mathcal{C}_m}=\lcm(s_1,2)=2$.
If $k=1$, then $d_1=\cdots=d_m=1$ and if $k=2$, then $d_1=\cdots=d_m=2$.
So we have
\begin{align*}
\chi^1_{C_m}(q)&=\prod_{i = 1}^{m}(q-1-2i+2)=\prod_{i = 1}^{m}(q-2i+1).\\
\chi^2_{C_m}(q)&=\prod_{i = 1}^{m}(q-2-2i+2)=\prod_{i = 1}^{m}(q-2i).
\end{align*}

$(3)$ Let $r=t=0$.
We have $\mathcal{D}_m(\bm{s})=\mathcal{D}_m$ and $\rho_{\mathcal{D}_m}=2$.
Then
\begin{align*}
\chi^1_{D_m}(q)&=\prod_{i=1}^m(q-2i+1)+m\prod_{i=1}^{m-1}(q-2i+1)=(q-m+1)\prod_{i=1}^{m-1}(q-2i+1),\\
\chi^2_{D_m}(q)&=\prod_{i=1}^{m}(q-2i)+2m\prod_{i=1}^{m-1}(q-2i)+m(m-1)\prod_{i=1}^{m-2}(q-2i)\\
&=\left((q-2m)(q-2m+2)+2m(q-2m+2)+m(m-1)\right)\prod_{i=1}^{m-2}(q-2i)\\
&=\left(q^2-2(m-1)q+m(m-1)\right)\prod_{i=1}^{m-2}(q-2i).
\end{align*}
These quasi-polynomials coincide with that of \eqref{eq-chi-quasi-coxB}, \eqref{eq-chi-quasi-coxC}, and \eqref{eq-chi-quasi-coxD}.
\end{Examp}

\section{Deleting hyperplanes}\label{sec-deleting-hyperplane}
In this section, we introduce a sequence of arrangements that can be obtained by deleting the hyperplanes while keeping the condition that all $k$-constituents are factorized.
For $H_q\in\mathscr{A}_q$, we call $\mathscr{A}_q\setminus\{H_q\}$ the deletion and we call
\begin{align*}
\mathscr{A}_q^{H_q}:=\left\{H_q\cap H_{q}^{\prime}\,\middle|\,H_{q}^{\prime}\in\mathscr{A}_q\setminus\{H_q\},\ H_q\cap H_{q}^{\prime}\neq\emptyset\right\}
\end{align*}
the restriction in $\mathbb{Z}/q\mathbb{Z}$.

\begin{Lem}\label{lem-inc-exc}
Let $\mathscr{A}$ be an arrangement such that all coefficients of the defining polynomials of all hyperplanes are integers.
Let $H\in\mathscr{A}$. For any $q\in\mathbb{Z}_{>0}$,
\begin{align*}
\left|M_{\mathscr{A}_q}(q)\right|=\left|M_{\mathscr{A}_q\setminus\{H_q\}}(q)\right|-\left|H_q\right|+\left|\bigcup_{H_{q}^{\prime}\in\mathscr{A}_q\setminus\{H_q\}}H_q\cap H_{q}^{\prime}\right|.
\end{align*}
\end{Lem}
\begin{proof}
It holds that $\displaystyle\bigcup_{H_{q}^{\prime}\in\mathscr{A}_q}H_q^{\prime}=\left(\bigcup_{H_{q}^{\prime}\in\mathscr{A}_q\setminus\{H_q\}}H_q^{\prime}\right)\cup H_q$.
By the inclusion-exclusion principle, we have
\begin{align*}
\left|\bigcup_{H_{q}^{\prime}\in\mathscr{A}_q}H_q^{\prime}\right|=\left|\bigcup_{H_{q}^{\prime}\in\mathscr{A}_q\setminus\{H_q\}}H_q^{\prime}\right| + \left|H_q\right| - \left|\bigcup_{H_{q}^{\prime}\in\mathscr{A}_q\setminus\{H_q\}}H_q\cap H_q^{\prime}\right|.
\end{align*}
Therefore
\begin{align*}
\left|\mathbb{Z}_q^m\right|-\left|\bigcup_{H_{q}^{\prime}\in\mathscr{A}_q}H_q^{\prime}\right|=\left|\mathbb{Z}_q^m\right|-\left|\bigcup_{H_{q}^{\prime}\in\mathscr{A}_q\setminus\{H_q\}}H_q^{\prime}\right| - \left|H_q\right| + \left|\bigcup_{H_{q}^{\prime}\in\mathscr{A}_q\setminus\{H_q\}}H_q\cap H_q^{\prime}\right|.
\end{align*}
This means $\left|M_{\mathscr{A}_q}(q)\right|=\left|M_{\mathscr{A}_q\setminus\{H_q\}}(q)\right|-\left|H_q\right|+\left|\bigcup_{H_{q}^{\prime}\in\mathscr{A}_q\setminus\{H_q\}}H_q\cap H_{q}^{\prime}\right|$.
\end{proof}
A matrix $P\in\Mat_{m\times m}(\mathbb{Z})$ is said to be unimodular if $\det(P)\in\{1,-1\}$.
If $P\in\Mat_{m\times m}(\mathbb{Z})$ is unimodular, then the inverse $P^{-1}\in\Mat_{m\times m}(\mathbb{Z})$ is also unimodular.
\begin{Cor}\label{Cro-del-rest-gcd=1}
Let $\mathscr{A}$ be an arrangement such that all coefficients of the defining polynomials of all hyperplanes are integers.
Let $\rho$ be a period of $\mathscr{A}$ and let $H\in\mathscr{A}$.
We write $H=\{a_1x_1+\cdots+a_mx_m=b\}$ for some $a_1,\dots,a_m,b\in\mathbb{Z}$.
If there exists a unimodular matrix $P\in\Mat_{m\times m}(\mathbb{Z})$ such that $P\,^t([a_1]_q,\dots,[a_m]_q)=\,^t([1]_q,[0]_q,\dots,[0]_q)$, then the deletion-restriction formula
\begin{align*}
\left|M_{\mathscr{A}_q}(q)\right|=\left|M_{\mathscr{A}_q\setminus\{H_q\}}(q)\right|-\left|M_{\mathscr{A}_q^{H_q}}(q)\right|
\end{align*}
holds for any $q\in\mathbb{Z}_{>0}$.
\end{Cor}
\begin{proof}
We set $[h]_q:=\,^t([a_1]_q,\dots,[a_m]_q)$.
Then we have
\begin{align*}
\left|H_q\right|&=\left|\left\{\bm{x}\in\mathbb{Z}_q^m\,\middle|\,\bm{x}[h]_q=[b]_q\right\}\right|
=\left|\left\{\bm{x}\in\mathbb{Z}_q^m\,\middle|\,\bm{x}P^{-1}P[h]_q=[b]_q\right\}\right|\\
&=\left|\left\{\bm{y}\in\mathbb{Z}_q^m\,\middle|\,[y_1]_q=[b]_q\right\}\right|
=q^{m-1}=\left|\mathbb{Z}_q^{m-1}\right|,
\end{align*}
where we write $\bm{y}=\bm{x}P^{-1}$ in the equalities above.
So we have $\left|H_q\right| - \left|\bigcup_{H_{q}^{\prime}\in\mathscr{A}_q\setminus\{H_q\}}H_q\cap H_q^{\prime}\right|=\left|M_{\mathscr{A}_q^{H_q}}(q)\right|$.
\end{proof}
Corollary \ref{Cro-del-rest-gcd=1} is the special case of Corollary 4.11 in the paper \cite{Liu-Tran-Yoshinaga21}.
If $\gcd(a_1,\dots,a_m)=1$, then the assumption (existence of a unimodular matrix $P$) of Lemma \ref{Cro-del-rest-gcd=1} holds.
In addition, Corollary \ref{Cro-del-rest-gcd=1} implies that the deletion-restriction formula for characteristic quasi-polynomials holds if there exists an index $i$ such that the coefficient of $x_i$ is $1$.
If there are no variables in $H$ such that the coefficient is $1$, then the row operation corresponding to the product of unimodular matrices makes it easier to understand the restriction arrangement.
For example, let $\mathscr{A}=\{\{3x_1+2x_2+x_3=0\},\{2x_1+5x_2=0\},\{2x_1+4x_2+2x_3=0\}\}$ and $H=\{2x_1+5x_2=0\}$. Since
\begin{align*}
\left(
\begin{matrix}
3&2&2\\ 2&5&4\\ 1&0&2
\end{matrix}
\right)\ \rightarrow\ \left(
\begin{matrix}
3&2&2\\ -4&1&0\\ 1&0&2
\end{matrix}
\right)\ \rightarrow\ \left(
\begin{matrix}
11&0&2\\ -4&1&0\\ 1&0&2
\end{matrix}
\right)\ \rightarrow\ \left(
\begin{matrix}
-4&1&0\\ 11&0&2\\ 1&0&2
\end{matrix}
\right),
\end{align*}
we have $\mathscr{A}_q^{H_q}=\{\{11x_2+x_3=0\}_q,\{2x_2+2x_3=0\}_q\}$ for $q\gg 0$.

To emphasize which dimension of arrangements is considered in the following subsections, an $m$-dimensional hyperplane arrangement is sometimes referred to as an “$m$-arrangement.”

\subsection{A sequence of subarrangements of deformation of type A}
We give a sequence of subarrangements of $\mathcal{A}_m(\bm{s_m})$ that all $k$-constituents are factorized.
We recall that
\begin{align*}
\mathcal{A}_m(\bm{s_m})=\left\{\{s_ix_i=0\}\,\middle|\,1\leq i\leq m\right\}\cup\left\{\{x_i-x_j=0\}\,\middle|\,1\leq i<j\leq m\right\}.
\end{align*}
\begin{Them}\label{thm-deleting-typeA}
Let $\bm{s}_m=(s_1,\dots,s_m)$ be a tuple of integers with $s_m|\cdots|s_1$.
Let $\mathscr{B}$ be a subset of $\{\{x_i-x_m=0\}\mid 1\leq i\leq m-1\}$ with $|\mathscr{B}|=l$.
Let
\begin{align*}
\mathscr{A}(\mathscr{B})=\mathcal{A}_m(\bm{s}_m)\setminus\left(\{\{s_mx_m=0\}\}\cup\mathscr{B}\right)\quad(0\leq l\leq m-1).
\end{align*}
Then $\rho_{\mathscr{A}(\mathscr{B})}=s_1$ and the $k$-constituent of $\chi_{\mathscr{A}(\mathscr{B})}^{quasi}(q)$ is
\begin{align}\label{eq-chiAmsm-deleting}
\chi_{\mathscr{A}(\mathscr{B})}^{k}(q)=(q-m+l+1)\prod_{i=1}^{m-1}(q-d_i(k)-i+1)
\end{align}
for $k|s_1$.
\end{Them}
\begin{proof}
Since $\rho_{\mathcal{A}_m(\bm{s}_m)}=s_1$, we also have $\rho_{\mathscr{A}(\mathscr{B})}=s_1$.
Let $q\in k+\rho_{\mathcal{A}_m(\bm{s}_m)}\mathbb{Z}$ with $q\gg 0$.
We prove the equality \eqref{eq-chiAmsm-deleting} by induction on $l$.
First let $l=0$, that is, $\mathscr{B}=\emptyset$.
We define the subarrangement $\overline{\mathcal{A}}$ of the $m$-arrangement $\mathcal{A}_m(\bm{s}_m)$ by
\begin{align*}
\overline{\mathcal{A}}:=\left\{\{\bm{x}\in\mathbb{Z}^{m}\mid s_ix_i=0\}\,\middle|\,1\leq i\leq m-1\right\}\cup\left\{\{\bm{x}\in\mathbb{Z}^{m}\mid  x_i-x_j=0\}\,\middle|\,1\leq i<j\leq m-1\right\}.
\end{align*}
Depending on whether or not $x_m$ is included in the defining equations, the hyperplanes of $m$-arrangement $\mathcal{A}_m(\bm{s}_m)$ can be divided as
\begin{align*}
\mathcal{A}_m(\bm{s}_m)=\overline{\mathcal{A}}\cup\{\{s_mx_m=0\}\}\cup\{\{x_i-x_m=0\}\mid 1\leq i\leq m-1\}.
\end{align*}
Let $H=\{s_mx_m=0\}$.
Then $\mathscr{A}(\emptyset)=\mathcal{A}_m(\bm{s}_m)\setminus\{H\}$.
We define $H_q^a:=\{x_m=aq/d_m\}$ for $a\in\{0,1,\dots,d_m-1\}$.
Since $d_m=\gcd(s_m,q)$, we have the disjoint union $\displaystyle H_q=\bigsqcup_{a=0}^{d_m-1}H_q^a$.
The number of elements in the complements of $\mathcal{A}_m(\bm{s}_m)_q$ is equal to the number of elements in the complement of the affine arrangement $\overline{\mathcal{A}}_q\cup\{H^a_q\mid 0\leq a\leq d_m-1\}\cup\{\{x_i-x_m=0\}_q\mid 1\leq i\leq m-1\}$.
Hence this proof will focus on the latter one.
Let $a\in\{0,1,\dots,d_m-1\}$ and we consider the restriction on $H^a_q$.
For $1\leq i< m$, we have
\begin{align*}
\{x_i-x_m=0\}_q\cap H_q^a&=\{x_i=aq/d_m\}_q\cap H_q^a.
\end{align*}
In addition, since $s_m|s_i$, we have $\{x_i=aq/d_m\}_q\cap H_q^a\subseteq\{s_ix_i=0\}_q\cap H_q^a$.
In other words, for $\bm{x}\in H^a_q$, the condition $\bm{x}\not\in\{s_ix_i=0\}_q\cap H_q^a$ implies $\bm{x}\not\in\{x_i-x_m=0\}_q\cap H_q^a$ if $1\leq i<m$.
Therefore
\begin{align*}
\left|M_{(\mathcal{A}_m(\bm{s}_m)_q\setminus\{H_q^b\mid 0\leq b\leq a-1\})^{H^a_q}}(q)\right|=\left|H^a_q\setminus\bigcup_{H^{\prime}_q\in\overline{\mathcal{A}}_q}(H^{\prime}_q\cap H^a_q)\right|=\left|M_{\mathcal{A}_{m-1}(\bm{s}_{m-1})_q}(q)\right|,
\end{align*}
where $\mathcal{A}_{m-1}(\bm{s}_{m-1})$ is an $(m-1)$-arrangement in the equation above.
We recall that 
\begin{align*}
\left|M_{\mathcal{A}_{m}(\bm{s}_{m})_q}(q)\right|=\chi^k_{\mathcal{A}_{m}(\bm{s}_{m})}(q)=\prod_{i=1}^{m}(q-d_i-i+1)
\end{align*}
by Theorem \ref{thm-charquaipoly-defomA}.
We note that the value of $d_i$ changes depending on $k$.
By applying Corollary \ref{Cro-del-rest-gcd=1} for the hyperplanes $H^{0},H_1,\dots,H^{d_m-1}$ in this order, we have
\begin{align*}
\chi^k_{\mathscr{A}(\emptyset)}(q)&=
\left|M_{(\mathcal{A}_{m}(\bm{s}_{m})\setminus\{H\})_q}(q)\right|
=\left|M_{\mathcal{A}_{m}(\bm{s}_{m})_q}(q)\right|+\sum_{a=0}^{d_m-1}\left|M_{(\mathcal{A}_m(\bm{s}_m)_q\setminus\{H_q^b\mid 0\leq b\leq a-1\})^{H^a_q}}(q)\right|\\
&=\prod_{i=1}^{m}(q-d_i-i+1)+d_m\prod_{i=1}^{m-1}(q-d_i-i+1)
=(q-m+1)\prod_{i=1}^{m-1}(q-d_i-i+1).
\end{align*}

Next let $l>0$. We assume that the equality \eqref{eq-chiAmsm-deleting} holds for $|\mathscr{B}|=l-1$.
Let $H\in\mathscr{B}$ and we define $\mathscr{B}^{\prime}:=\mathscr{B}\setminus\{H\}$.
Since the arrangement $\mathscr{A}(\mathscr{B})$ is described as
\begin{align*}
\mathscr{A}(\mathscr{B})=\overline{\mathcal{A}}\cup\{\{x_i-x_m=0\}\mid 1\leq i\leq m-1\}\setminus\mathscr{B},
\end{align*}
we have $\displaystyle \mathscr{A}(\mathscr{B})_q^{H_q}=\left\{H^{\prime}_q\cap H_q\,\middle|\,H^{\prime}_q\in\overline{\mathcal{A}}_q\right\}$.
Then $\left|M_{\mathscr{A}(\mathscr{B})_q^{H_q}}(q)\right|=\left|H_q\setminus\bigcup_{H^{\prime}_q\in\overline{\mathcal{A}}_q}(H^{\prime}_q\cap H_q)\right|=\left|M_{\mathcal{A}_{m-1}(\bm{s}_{m-1})_q}(q)\right|$, where $\mathcal{A}_{m-1}(\bm{s}_{m-1})$ is an $(m-1)$-arrangement.
By the induction hypothesis,
\begin{align*}
\chi^k_{\mathscr{A}(\mathscr{B}^{\prime})}(q)=(q-m+l)\prod_{i=1}^{m-1}(q-d_i-i+1).
\end{align*}
Therefore since $\mathscr{A}(\mathscr{B})_q=\mathscr{A}(\mathscr{B}^{\prime})_q\setminus\{H_q\}$, we have
\begin{align*}
\chi_{\mathscr{A}(\mathscr{B})}^k(q)
=\left|M_{\mathscr{A}(\mathscr{B}^{\prime})_q}(q)\right|+\left|M_{\mathscr{A}(\mathscr{B})_q^{H_q}}(q)\right|
=(q-m+l+1)\prod_{i=1}^{m-1}(q-d_i-i+1)
\end{align*}
by Corollary \ref{Cro-del-rest-gcd=1}.
\end{proof}

\subsection{A sequence of subarrangements of deformation of type D}
Let $\bm{s}_m=(s_1,\dots,s_m)$ be a tuple of integers of length $m$, where $s_1,\dots,s_r$ are even, $s_{r+1}=\cdots=s_m=s$ is odd, and $s|s_r|\cdots|s_1$.
In this subsection, we give a sequence of subarrangements of $\mathcal{D}_m(\bm{s_m})$ that all $k$-constituents are factorized.
We recall that
\begin{align*}
\mathcal{D}_m(\bm{s_m})=\left\{\{s_ix_i=0\}\,\middle|\,1\leq i\leq m\right\}\cup\left\{\{x_i-x_j=0\},\{x_i+x_j=0\}\,\middle|\,1\leq i<j\leq m\right\}.
\end{align*}
Similarly to the proof of Theorem \ref{thm-deleting-typeA}, we define the subarrangement $\overline{\mathcal{D}}$ of the $m$-arrangement $\mathcal{D}_m(\bm{s}_m)$ by
\begin{align*}
\overline{\mathcal{D}}:=\left\{\{s_ix_i=0\}\,\middle|\,1\leq i\leq m-1\right\}\cup\left\{\{x_i-x_j=0\},\{x_i+x_j=0\}\,\middle|\,1\leq i<j\leq m-1\right\},
\end{align*}
where the hyperplanes in $\overline{\mathcal{D}}$ are subsets of $\mathbb{Z}_q^m$.
The arrangement $\overline{\mathcal{D}}$ will appear frequently in this subsection.
We first study the case when $r=m$.

\begin{Prop}\label{prop-chiC-deleting}
Let $s_1,\dots,s_m$ be even numbers with $s_m|s_{m-1}|\cdots|s_1$, and we set $\bm{s}_{m}=(s_1,\dots,s_m)$.
We take a subset $\mathscr{B}$ of $\{\{x_i-x_m=0\},\{x_i+x_m=0\}\mid 1\leq i\leq m-1\}$ with $|\mathscr{B}|=l$.
Let
\begin{align*}
\mathscr{A}(\mathscr{B})&=\mathcal{D}_m(s_m)\setminus\left(\left\{\left\{s_mx_m=0\right\}\right\}\cup\mathscr{B}\right).
\end{align*}
Then $\rho_{\mathscr{A}(\mathscr{B})}=s_1$ and the $k$-constituent of $\chi_{\mathscr{A}(\mathscr{B})}^{quasi}(q)$ is
\begin{align}\label{eq-chieven-deleting}
\chi^{k}_{\mathscr{A}(\mathscr{B})}(q)=(q-2m+l+2)\prod_{i=1}^{m-1}(q-d_i(k)-2i+2)
\end{align}
for $k|s_1$.
\end{Prop}
\begin{proof}
It is clear that $\rho_{\mathscr{A}(\mathscr{B})}=s_1$ since $s_1$ is even.
Let $q\in k+s_1\mathbb{Z}$ with $q\gg 0$.
We prove the equality \eqref{eq-chieven-deleting} by induction on $l$.
Let $\mathscr{B}=\emptyset$.
Let $H=\{s_mx_m=0\}$.
Then $\mathscr{A}(\emptyset)=\mathcal{D}_m(\bm{s}_m)\setminus\{\{s_mx_m=0\}\}$.
The hyperplanes of $\mathcal{D}_m(\bm{s_m})$ can be divided as
\begin{align*}
\mathcal{D}_m(\bm{s_m})=\overline{\mathcal{D}}\cup\{\{s_mx_m=0\}\}\cup\{\{x_i-x_m=0\},\{x_i+x_m=0\}\mid 1\leq i\leq m-1\}.
\end{align*}
We define $H_q^a:=\{x_m=aq/d_m\}$ for $a\in\{0,1,\dots,d_m-1\}$.
We recall that $d_m=\gcd(s_m,q)$ and $\displaystyle H_q=\bigsqcup_{a=0}^{d_m-1}H^a_q$.
Let $a\in\{0,1,\dots,d_m-1\}$.
For $1\leq i< m$, we have
\begin{align*}
\{x_i-x_m=0\}_q\cap H^a_q&=\{x_i=aq/d_m\}_q\cap H^a_q,\\
\{x_i+x_m=0\}_q\cap H^a_q&=\{x_i=-aq/d_m\}_q\cap H^a_q.
\end{align*}
Since $\{x_i=aq/d_m\}_q\cap H^a_q\subseteq\{s_ix_i=0\}_q\cap H^a_q$ and $\{x_i=-aq/d_m\}_q\cap H^a_q\subseteq\{s_ix_i=0\}_q\cap H^a_q$, we have
\begin{align*}
\left|M_{(\mathcal{D}_m(\bm{s}_m)_q\setminus\{H_q^b\mid 0\leq b\leq a-1\})^{H^a_q}}(q)\right|=\left|H^a_q\setminus\bigcup_{H^{\prime}_q\in\overline{\mathcal{D}}_q}(H^{\prime}_q\cap H^a_q)\right|=\left|M_{\mathcal{D}_{m-1}(\bm{s}_{m-1})_q}(q)\right|,
\end{align*}
where we note that $\mathcal{D}_{m-1}(\bm{s}_{m-1})$ is an $(m-1)$-arrangement.
Since $r=t=m$, we have
\begin{align*}
\chi^k_{\mathcal{D}_{m}(\bm{s}_{m})}(q)=\prod_{i=1}^{m}(q-d_i-2i+2)
\end{align*}
by Theorem \ref{thm-charquaipoly-defomD}.
We note that the value of $d_i$ changes depending on $k$.
Therefore
\begin{align*}
\chi^k_{\mathscr{A}(\emptyset)}(q)=
\chi^k_{\mathcal{D}_{m}(\bm{s}_{m})}(q)+d_m\chi^k_{\mathcal{D}_{m-1}(\bm{s}_{m-1})}(q)
=(q-2m+2)\prod_{i=1}^{m-1}(q-d_i-2i+2)
\end{align*}
by applying Corollary \ref{Cro-del-rest-gcd=1} for $H^0,H^1,\dots,H^{d_m-1}$.

Next let $l>0$, and we suppose that the equality \eqref{eq-chieven-deleting} holds for $|\mathscr{B}|=l-1$.
Let $H\in\mathscr{B}$ and we define $\mathscr{B}^{\prime}:=\mathscr{B}\setminus\{H\}$.
Here we can describe $\mathscr{A}(\mathscr{B})$ as
\begin{align*}
\mathscr{A}(\mathscr{B})=\overline{\mathcal{D}}\cup\{\{x_i-x_m=0\},\{x_i+x_m=0\}\mid 1\leq i\leq m-1\}\setminus\mathscr{B}.
\end{align*}
For $H^{\prime}\in\{\{x_i-x_m=0\},\{x_i+x_m=0\}\mid 1\leq i\leq m-1\}\setminus\mathscr{B}$, either $H^{\prime}_q\cap H_q$ is contained in $\overline{\mathcal{D}}_q^{H_q}$ or there exists an index $1\leq i\leq m-1$ such that $H^{\prime}_q\cap H_q=\{2x_i=0\}_q\cap H_q$.
Since $s_i$ is even, we have $\{2x_i=0\}_q\cap H_q\subseteq\{s_ix_i=0\}_q\cap H_q$.
Thus $\displaystyle \left|M_{\mathscr{A}(\mathscr{B})_q^{H_q}}(q)\right|
=\left|M_{\mathcal{D}_{m-1}(\bm{s}_{m-1})_q}(q)\right|$.
By the induction hypothesis and Corollary \ref{Cro-del-rest-gcd=1}, we have
\begin{align*}
\chi^k_{\mathscr{A}(\mathscr{B})}(q)
&=\left|M_{\mathcal{D}_{m}(\mathscr{A}(\mathscr{B}^{\prime})_q}(q)\right|+\left|M_{\mathscr{A}(\mathscr{B})^{H_q}_q}(q)\right|\\
&=(q-2m+l+1)\prod_{i=1}^{m-1}(q-d_i-2i+2)+\prod_{i=1}^{m-1}(q-d_i-2i+2)\\
&=(q-2m+l+2)\prod_{i=1}^{m-1}(q-d_i-2i+2).
\end{align*}
\end{proof}
Next we study the case when $0\leq r<m$.
Let $s_1,\dots,s_r$ be even and let $s$ be odd with $s|s_r|\cdots|s_1$.
We set a tuple $\bm{s}_m=(s_1,\dots,s_r,s,\dots,s)$ of length $m$, where its last $m-r$ entries are all $s$.
Before giving a sequence, we compute the characteristic quasi-polynomial for $\mathcal{D}_{m}(\bm{s}_{m})\cup\{\{2x_i=0\}\mid r+1\leq i\leq l\}$.
We use it to obtain the desired sequence.
The minimum period of $\chi_{\mathcal{D}_{m}(\bm{s}_{m})\cup\{\{2x_i=0\}\mid r+1\leq i\leq l\}}^{quasi}$ is $\lcm(s_1,2)=s_1$ which coincides with that of $\chi_{\mathcal{D}_{m}(\bm{s}_{m})}^{quasi}$.

\begin{Lem}\label{lem-char-quasi-poly-Ds+2x}
Let $0\leq r<m$.
Let $s_1,\dots,s_r$ be even numbers and let $s$ be an odd number with $s|s_r|\cdots|s_1$.
Let $\bm{s}_m=(s_1,\dots,s_r,s,\dots,s)$, where its last $m-r$ entries are all $s$.
Let $k|s_1$ and we write $d(k)=\gcd(s,k)$.
For any $r\leq l\leq m$, the $k$-constituent of $\chi_{\mathcal{D}_{m}(\bm{s}_{m})\cup\{\{2x_i=0\}\mid r+1\leq i\leq l\}}^{quasi}$ is
\begin{align*}
&\chi^k_{\mathcal{D}_{m}(\bm{s}_{m})\cup\{\{2x_i=0\}\mid r+1\leq i\leq l\}}(q)\\
=&
\begin{cases}
\prod_{i=1}^{r}(q-d_i(k)-2i+2)\prod_{i=r+1}^{m}(q-d(k)-2i+2)&\ \text{if $k$ is odd},\\
(q-d(k)-m-l+1)\prod_{i=1}^{r}(q-d_i(k)-2i+2)\prod_{i=r+1}^{m-1}(q-d(k)-2i+1)&\ \text{if $k$ is even}.
\end{cases}
\end{align*}
\end{Lem}
\begin{proof}
Let $q\in k+s_1\mathbb{Z}$ with $q\gg 0$.
If $k$ is odd, then $2$ is invertible in $\mathbb{Z}_q$.
So we have $\mathcal{D}_{m}(\bm{s}_{m})_q\cup\{\{2x_i=0\}_q\mid r+1\leq i\leq l\}\}=\mathcal{D}_{m}(\bm{s}_{m})_q$.
Therefore $\displaystyle \chi^k_{\mathcal{D}_{m}(\bm{s}_{m})\cup\{\{2x_i=0\}\mid r+1\leq i\leq l\}\}}(q)=\prod_{i=1}^{r}(q-d_i-2i+2)\prod_{i=r+1}^{m}(q-d-2i+2)$ by Theorem \ref{thm-charquaipoly-defomD}.

Next let $k$ be even.
Since we have the disjoint union $\{2x_i=0\}_q=\{x_i=0\}_q\sqcup\{x_i=q/2\}_q$, we have $\displaystyle \left|M_{\mathcal{D}_{m}(\bm{s}_{m})_q\cup \{\{2x_i=0\}_q\mid r+1\leq i\leq l\}\}}(q)\right|=\left|M_{\mathcal{D}_{m}(\bm{s}_{m})_q\cup\{\{x_i=q/2\}_q\mid r+1\leq i\leq l\}\}}(q)\right|$.
We prove the assertion by induction on $l$ and $m$.
If $l=r$, then $\mathcal{D}_{m}(\bm{s}_{m})_q\cup\{\{2x_i=0\}_q\mid r+1\leq i\leq l\}=\mathcal{D}_{m}(\bm{s}_{m})_q$.
So the assertion follows from Theorem \ref{thm-charquaipoly-defomD}.
If $m=r+1$, then $m-1\leq l\leq m$.
We note that, in this case, $\bm{s}_m=(s_1,\dots,s_{m-1},s)$ and $s_1,\dots,s_{m-1}$ are even.
If $l=m-1$, then $l=r$ while this case is already proved.
So let $l=m$ and we consider the arrangement $\mathcal{D}_{m}(\bm{s}_{m})_q\cup\{\{x_m=q/2\}_q\}$.
Let $H=\{x_m=q/2\}$.
Then we compute the restriction $\left(\mathcal{D}_{m}(\bm{s}_{m})_q\cup\{\{x_m=q/2\}_q\}\right)^{H_q}$.
Since
\begin{align*}
\mathcal{D}_{m}(\bm{s}_{m})=\overline{\mathcal{D}}\cup\{\{s_mx_m=0\}_q\}\cup\{\{x_i-x_m=0\},\{x_i+x_m=0\}\mid 1\leq i\leq m-1\},
\end{align*}
we have
\begin{align*}
\{x_i-x_m=0\}_q\cap H_q&=\{x_i=q/2\}_q\cap H_q\quad(i< m),\\
\{x_i+x_m=0\}_q\cap H_q&=\{x_i=q/2\}_q\cap H_q\quad(i< m).
\end{align*}
If $1\leq i\leq m-1$, then we have $\{x_i=q/2\}_q\cap H_q\subseteq \{s_ix_i=0\}\cap H_q$ since $s_i$ is even.
Therefore $\displaystyle \left|M_{(\mathcal{D}_{m}(\bm{s}_{m})_q\cup\{\{x_m=q/2\}_q\})^{H_q}}(q)\right|=\left|M_{\mathcal{D}_{m-1}(\bm{s}_{m-1})_q}(q)\right|$, where $\mathcal{D}_{m-1}(\bm{s}_{m-1})$ is an $(m-1)$-arrangement.
By Corollary \ref{Cro-del-rest-gcd=1}, we have
\begin{align*}
\left|M_{\mathcal{D}_{m}(\bm{s}_{m})_q\cup\{\{x_m=q/2\}_q\}}(q)\right|
=&\,\chi^k_{\mathcal{D}_{m}(\bm{s}_{m})}(q)-\chi^k_{\mathcal{D}_{m-1}(\bm{s}_{m-1})}(q)\\
=&\,(q-d-2m+2)\prod_{i=1}^{m-1}(q-d_i(k)-2i+2)-\prod_{i=1}^{m-1}(q-d_i(k)-2i+2)\\
=&\,(q-d-2m+1)\prod_{i=1}^{m-1}(q-d_i(k)-2i+2)
\end{align*}
while the assertion holds for $m=r+1$.

Let $l>r$ and $m>r+1$.
We assume that the assertion holds for $l-1$ and $m-1$.
Let $H=\{x_l=q/2\}$. Then
\begin{align*}
\{x_i-x_l=0\}_q\cap H_q&=\{x_i=q/2\}_q\cap H_q\quad(i\neq l),\\
\{x_i+x_l=0\}_q\cap H_q&=\{x_i=q/2\}_q\cap H_q\quad(i\neq l).
\end{align*}
We have $\{x_i=q/2\}_q\cap H_q\subseteq \{s_ix_i=0\}\cap H_q$ if $1\leq i\leq r$ and $\{x_i=q/2\}_q$ is not contained in $\mathcal{D}_{m}(\bm{s}_{m})_q\cup\{\{x_i=q/2\}_q\mid r+1\leq i\leq l\}$ if $l+1\leq i\leq m$.
So we have
\begin{align*}
\left|M_{(\mathcal{D}_{m}(\bm{s}_{m})_q\cup\{\{x_i=q/2\}_q\mid r+1\leq i\leq l\})^{H_q}}(q)\right|
=\left|M_{\mathcal{D}_{m-1}(\bm{s}_{m-1})_q\cup\{\{x_i=q/2\}_q\mid r+1\leq i\leq m-1\}}(q)\right|,
\end{align*}
where $\mathcal{D}_{m-1}(\bm{s}_{m-1})_q\cup\{\{x_i=q/2\}_q\mid r+1\leq i\leq m-1\}$, which arrears in the right hand side, is an $(m-1)$-arrangement.
By the induction hypothesis,
\begin{align*}
&\chi^k_{\mathcal{D}_{m}(\bm{s}_{m})\cup\{\{2x_i=0\}\mid r+1\leq i\leq l-1\}}(q)=(q-d-m-l+2)\prod_{i=1}^{r}(q-d_i-2i+2)\prod_{i=r+1}^{m-1}(q-d-2i+1),\\
&\chi^k_{\mathcal{D}_{m-1}(\bm{s}_{m-1})\cup\{\{2x_i=0\}\mid r+1\leq i\leq m-1\}}(q)=\prod_{i=1}^{r}(q-d_i-2i+2)\prod_{i=r+1}^{m-1}(q-d-2i+1).
\end{align*}
Therefore we have
\begin{align*}
&\chi^k_{\mathcal{D}_{m}(\bm{s}_{m})\cup \{2x_i=0\}\mid r+1\leq i\leq l\}}(q)\\
=&\,\chi^k_{\mathcal{D}_{m}(\bm{s}_{m})\cup\{\{2x_i=0\}\mid r+1\leq i\leq l-1\}}(q)-\chi^k_{\mathcal{D}_{m-1}(\bm{s}_{m-1})\cup\{\{2x_i=0\}\mid r+1\leq i\leq m-1\}}(q)\\
=&\,(q-d-m-l-1)\prod_{i=1}^{r}(q-d_i-2i+2)\prod_{i=r+1}^{m-1}(q-d-2i+1)
\end{align*}
by Corollary \ref{Cro-del-rest-gcd=1}.
\end{proof}

\begin{Them}\label{thm-chieo-deleting}
Let $0\leq r<m$.
Let $s_1,\dots,s_r$ be even numbers and let $s$ be an odd number with $s|s_r|\cdots|s_1$.
Let $\bm{s}_m=(s_1,\dots,s_r,s,\dots,s)$, where its last $m-r$ entries are all $s$.
Let
\begin{align*}
\mathscr{A}_1(l)&=\mathcal{D}_m(\bm{s}_m)\setminus\left\{\left\{x_i-x_m=0\right\}\,\middle|\,r+1\leq i\leq l\right\}\quad(r\leq l\leq m-1),\\
\mathscr{A}_2(l)&=\mathscr{A}_1(m-1)\setminus\left(\left\{\left\{sx_m=0\right\}\right\}\cup\left\{\left\{x_i+x_m=0\right\}\,\middle|\,r+1\leq i\leq l\right\}\right)\quad(r\leq l\leq m-1),\\
\mathscr{A}_3(l)&=\mathscr{A}_2(m-1)\setminus\left\{\left\{x_i-x_m=0\right\}\,\middle|\,1\leq i\leq l\right\}\quad(0\leq l\leq r),\\
\mathscr{A}_4(l)&=\mathscr{A}_3(r)\setminus\left\{\left\{x_i+x_m=0\right\}\,\middle|\,1\leq i\leq l\right\}\quad(0\leq l\leq r).
\end{align*}
For $k|s_1$, we write $d(k)=\gcd(s,k)$. We define
\begin{align*}
P_k(q):=
\begin{cases}
\prod_{i=1}^{r}(q-d_i(k)-2i+2)\prod_{i=r+1}^{m-1}(q-d(k)-2i+2)&\ \text{if $k$ is odd},\\
\prod_{i=1}^{r}(q-d(k)-2i+2)\prod_{i=r+1}^{m-2}(q-d(k)-2i+1)&\ \text{if $k$ is even}.
\end{cases}
\end{align*}
Then $\rho_{\mathscr{A}_1(l)}=\rho_{\mathscr{A}_2(l)}=\rho_{\mathscr{A}_3(l)}=\rho_{\mathscr{A}_4(l)}=s_1$ and the $k$-constituents are described as follows.
\begin{align*}
(1)\ \chi^k_{\mathscr{A}_1(l)}(q)&=
\begin{cases}
(q-d(k)-2m-r+l+2)P_k(q)&\ \text{if $k$ is odd},\\
(q-d(k)-2m-r+l+3)(q-d(k)-m-r+1)P_k(q)&\ \text{if $k$ is even}.
\end{cases}
\\
(2)\ \chi^k_{\mathscr{A}_2(l)}(q)&=
\begin{cases}
(q-m-2r+l+1)P_k(q)&\ \text{if $k$ is odd},\\
(q-m-2r+l+1)(q-d(k)-m-r+2)P_k(q)&\ \text{if $k$ is even}.
\end{cases}
\\
(3)\ \chi^k_{\mathscr{A}_3(l)}(q)&=
\begin{cases}
(q-2r+l)P_k(q)&\ \text{if $k$ is odd},\\
(q-2r+l)(q-d(k)-m-r+2)P_k(q)&\ \text{if $k$ is even}.
\end{cases}
\\
(4)\ \chi^k_{\mathscr{A}_4(l)}(q)&=
\begin{cases}
(q-r+l)P_k(q)&\ \text{if $k$ is odd},\\
(q-r+l)(q-d(k)-m-r+2)P_k(q)&\ \text{if $k$ is even}.
\end{cases}
\end{align*}
\end{Them}
\begin{proof}
$(1)$ We prove the assertion by induction on $l$.
Let $q\in k+s_1\mathbb{Z}$ with $q\gg 0$.
We first prove the assertion for $l=r$.
Since $\mathscr{A}_1(r)=\mathcal{D}_m(\bm{s}_m)$, by Theorem \ref{thm-charquaipoly-defomD}, the $k$-constituent of $\chi_{\mathscr{A}_1(r)}^{quasi}$ is
\begin{align*}
\chi^k_{\mathscr{A}_1(r)}(q)&=
\begin{cases}
(q-d-2m-2)P_k(q)&\ \text{if $k$ is odd},\\
(q-d-2m+3)(q-d-m-r+1)P_k(q)&\ \text{if $k$ is even}
\end{cases}
\end{align*}
while the assertion holds.

Let $l>r$, and we assume that the assertion holds for $l-1$.
Let $H=\{x_l-x_m=0\}$.
Then $\mathscr{A}_1(l)=\mathscr{A}_1(l-1)\setminus\{H\}$.
The arrangement $\mathscr{A}_1(l-1)$ is described as
\begin{align*}
\mathscr{A}_1(l-1)
=&\,\overline{\mathcal{D}}\cup\{\{sx_m=0\}\}
\cup\left\{\{x_i-x_m=0\}\,\middle|\,1\leq i\leq r,l\leq i\leq m-1\right\}\\
&\,\cup\left\{\{x_i+x_m=0\}\,\middle|\,1\leq i\leq m-1\right\}.
\end{align*}
As elements in $\mathscr{A}_1(l-1)_q^{H_q}$, we have the following equalities:
\begin{align*}
\{sx_m=0\}_q\cap H_q&=\{sx_l=0\}_q\cap H_q,\\
\{x_i-x_m=0\}_q\cap H_q&=\{x_i-x_l=0\}_q\cap H_q\quad(1\leq i\leq r,l<i\leq m-1),\\
\{x_i+x_m=0\}_q\cap H_q&=
\begin{cases}
\{x_i+x_l=0\}_q\cap H_q&\quad(i\neq l),\\
\{2x_l=0\}_q\cap H_q&\quad(i=l).
\end{cases}
\end{align*}
These elements belong to $\overline{\mathcal{D}}_q^{H_q}$ except for $\{2x_l=0\}_q\cap H_q$.
Then we have $\displaystyle \left|M_{\mathscr{A}(l-1)_q^{H_q}}(q)\right|=\left|M_{\mathcal{D}_{m-1}(\bm{s}_{m-1})_q\cup\{\{2x_l=0\}_q\}}(q)\right|$ while this coincides with $\displaystyle \left|M_{\mathcal{D}_{m-1}(\bm{s}_{m-1})_q\cup\{\{2x_{r+1}=0\}_q\}}(q)\right|$.
By Lemma \ref{lem-char-quasi-poly-Ds+2x}, we have
\begin{align*}
\left|M_{\mathscr{A}_1(l-1)_q^{H_q}}(q)\right|=\chi^k_{\mathcal{D}_{m-1}(\bm{s}_{m-1})\cup\{\{2x_{r+1}=0\}\}}(q)=
\begin{cases}
P_k(q)&\ \text{if $k$ is odd},\\
(q-d-m-r+1)P_k(q)&\ \text{if $k$ is even}.
\end{cases}
\end{align*}
By the induction hypothesis,
\begin{align*}
\left|M_{\mathscr{A}_1(l-1)_q}(q)\right|=\chi^k_{\mathscr{A}_1(l-1)}(q)=
\begin{cases}
(q-d-2m-r+l+1)P_k(q)&\ \text{if $k$ is odd},\\
(q-d-2m-r+l+2)(q-d-m-r+1)P_k(q)&\ \text{if $k$ is even}.
\end{cases}
\end{align*}
Therefore we have
\begin{align*}
\chi^k_{\mathscr{A}_1(l)}(q)&=
\chi^k_{\mathscr{A}_1(l-1)}(q)+\chi^k_{\mathcal{D}_{m-1}(\bm{s}_{m-1})\cup\{\{2x_{r+1}=0\}\}}(q)\\
&=
\begin{cases}
(q-d-2m-r+l+1)P_k(q)&\ \text{if $k$ is odd},\\
(q-d-2m-r+l+3)(q-d-m-r+1)P_k(q)&\ \text{if $k$ is even}.
\end{cases}
\end{align*}
by Corollary \ref{Cro-del-rest-gcd=1}.

$(2)$ We also prove the assertion by induction on $l$.
Let $l=r$. Let $q\in k+s_1\mathbb{Z}$.
We set $H=\{sx_m=0\}$.
Then we have $\mathscr{A}_2(r)=\mathscr{A}_1(m-1)\setminus \{H\}$.
Let $H_q^a:=\{x_m=aq/d_m\}_q$ for $a\in\{0,1,\dots,d_m-1\}$.
We recall that 
\begin{align*}
\mathscr{A}_1(m-1)=\overline{\mathcal{D}}\cup\{H\}
\cup\left\{\{x_i-x_m=0\}\,\middle|\,1\leq i\leq r\right\}
\cup\left\{\{x_i+x_m=0\}\,\middle|\,1\leq i\leq m-1\right\}
\end{align*}
and $\displaystyle H_q=\bigsqcup_{a=0}^{d-1}H_q^a$.
For any $a\in\{0,1,\dots,d-1\}$ and $1\leq i< m$, the elements
\begin{align*}
\{x_i-x_m=0\}_q\cap H_q^a&=\{x_i=aq/d\}_q\cap H_q^a,\\
\{x_i+x_m=0\}_q\cap H_q^a&=\{x_i=-aq/d\}_q\cap H_q^a
\end{align*}
are contained in some hyperplanes in $\overline{\mathcal{D}}_q^{H_q^a}$.
So we have $\displaystyle \left|M_{\mathscr{A}_1(m-1)_q^{H_q^a}}(q)\right|=\left|M_{\mathcal{D}_{m-1}(\bm{s}_{m-1})_q}(q)\right|$.
Here
\begin{align*}
\chi^k_{\mathcal{D}_{m-1}(\bm{s}_{m-1})}(q)=
\begin{cases}
P_k(q)&\ \text{if $k$ is odd},\\
(q-d-m-r+2)P_k(q)&\ \text{if $k$ is even}
\end{cases}
\end{align*}
and
\begin{align*}
\chi^k_{\mathscr{A}_1(m-1)}(q)=
\begin{cases}
(q-d-m-r+1)P_k(q)&\ \text{if $k$ is odd},\\
(q-d-m-r+2)(q-d-m-r+1)P_k(q)&\ \text{if $k$ is even}.
\end{cases}
\end{align*}
Therefore, by Corollary \ref{Cro-del-rest-gcd=1},
\begin{align*}
\chi^k_{\mathscr{A}_2(r)}(q)&=
\left|M_{\mathscr{A}_1(m-1)_q}(q)\right|+\sum_{a=0}^{d-1}\left|M_{\mathscr{A}_1(m-1)_q^{H_q^a}}(q)\right|
=\chi^k_{\mathscr{A}_1(m-1)}(q)+d\chi^k_{\mathcal{D}_{m-1}(\bm{s}_{m-1})}(q)\\
&=
\begin{cases}
(q-m-r+1)P_k(q)&\ \text{if $k$ is odd},\\
(q-m-r+1)(q-d-m-r+2)P_k(q)&\ \text{if $k$ is even}.
\end{cases}
\end{align*}

Next let $l>r$, and we assume that the assertion holds for $l-1$.
We set $H=\{x_l+x_m=0\}$.
Then $\mathscr{A}_2(l)=\mathscr{A}_2(l-1)\setminus\{H\}$.
We recall that
\begin{align*}
\mathscr{A}_2(l-1)=\overline{\mathcal{D}}\cup\left\{\{x_i-x_m=0\}\,\middle|\,1\leq i\leq r\right\}\cap\{\{x_i+x_m=0\}\mid 1\leq i\leq r,l\leq i\leq m-1\}
\end{align*}
and then $\displaystyle \left|M_{\mathscr{A}_2(l-1)_q^{H_q}}(q)\right|=\left|M_{\mathcal{D}_{m-1}(\bm{s}_{m-1})_q}(q)\right|$.
By the induction hypothesis,
\begin{align*}
\chi^k_{\mathscr{A}_2(l-1)}(q)=
\begin{cases}
(q-m-2r+l)P_k(q)&\ \text{if $k$ is odd},\\
(q-m-2r+l)(q-d-m-r+2)P_k(q)&\ \text{if $k$ is even}.
\end{cases}
\end{align*}
Therefore we have
\begin{align*}
\chi^k_{\mathscr{A}_2(l)}(q)&=
\chi^k_{\mathscr{A}_2(l-1)}(q)+\chi^k_{\mathcal{D}_{m-1}(\bm{s}_{m-1})}(q)\\
&=
\begin{cases}
(q-m-2r+l+1)P_k(q)&\ \text{if $k$ is odd},\\
(q-m-2r+l+1)(q-d-m-r+2)P_k(q)&\ \text{if $k$ is even}
\end{cases}
\end{align*}
by Corollary \ref{Cro-del-rest-gcd=1}.

$(3)$ Let $H=\{x_l-x_m=0\}$.
Then we obtain $\displaystyle \left|M_{\mathscr{A}_3(l-1)_q^{H_q}}(q)\right|=\left|M_{\mathcal{D}_{m-1}(\bm{s}_{m-1})_q\cup\{\{2x_l=0\}_q\}}(q)\right|$ from the same arguments as in $(1)$ and $(2)$.
Since $s_l$ is even, $\{2x_l=0\}_q\cap H_q\subseteq\{s_lx_l=0\}_q\cap H_q$ while we have $\displaystyle \left|M_{\mathcal{D}_{m-1}(\bm{s}_{m-1})_q\cup\{\{2x_l=0\}_q\}}(q)\right|=\left|M_{\mathcal{D}_{m-1}(\bm{s}_{m-1})_q}(q)\right|$.
Therefore the assertion follows from the induction on $l$.

$(4)$ Let $H=\{x_l+x_m=0\}$.
Since $\displaystyle \left|M_{\mathscr{A}_3(l-1)_q^{H_q}}(q)\right|=\left|M_{\mathcal{D}_{m-1}(\bm{s}_{m-1})_q}(q)\right|$, the proof is similar to $(1)$, $(2)$, and $(3)$. 
\end{proof}
Since $\mathscr{A}_4(r)=\overline{\mathcal{D}}$, Theorem \ref{thm-chieo-deleting} gives a sequence of arrangements such that all $k$-constituents are factorized, one by one from $\mathcal{D}_{m}(\bm{s}_{m})$, deleting hyperplanes to $\overline{\mathcal{D}}$ which can be regarded as $\mathcal{D}_{m-1}(\bm{s}_{m-1})$.

\begin{Examp}
Let $m=3$, $r=2$, $\bm{s}_3=(12,3,3)$.
We list $k$-constituents obtained in Theorem \ref{thm-chieo-deleting}.
The hyperplanes to be deleted are
\begin{align*}
\{x_2-x_3=0\},\ \{3x_3=0\},\ \{x_2+x_3=0\},\ \{x_1-x_3=0\},\ \{x_1+x_3=0\},
\end{align*}
in that order:
\begin{align*}
&\chi^{6}_{\mathcal{D}_{3}(\bm{s}_{3})}(q) = \left(q - 6\right)^{3},\\
&\chi^{6}_{\mathscr{A}_1(3)}(q) =\left(q - 6\right)^{2} \left(q - 5\right),\\
&\chi^{6}_{\mathscr{A}_2(2)}(q) =\left(q - 6\right) \left(q - 5\right) \left(q - 3\right),\\
&\chi^{6}_{\mathscr{A}_2(3)}(q) =\left(q - 6\right) \left(q - 5\right) \left(q - 2\right),\\
&\chi^{6}_{\mathscr{A}_3(1)}(q) =\left(q - 6\right) \left(q - 5\right) \left(q - 1\right),\\
&\chi^{6}_{\mathscr{A}_3(2)}(q) =q \left(q - 6\right) \left(q - 5\right).
\end{align*}
\end{Examp}
Changing the order of the hyperplanes to be deleted may result in the characteristic quasi-polynomial not being factorized. In particular, if $\{sx_m=0\}$ is deleted first, then $k$-constituents will not factorize.
\begin{Examp}
Let $m=4$, $r=2$, and $\bm{s}_4=(4,2,1,1)$.
Then the $2$-constituent of the characteristic quasi-polynomial for the arrangement obtained by deleting $\{x_4=0\}$ from $\mathcal{D}_{4}(\bm{s}_{4})$ is
\begin{align*}
\chi^{2}_{(\mathcal{D}_{4}(\bm{s}_{4})\setminus\{x_4=0\})}(q) = \left(q - 4\right) \left(q - 2\right) \left(q^{2} - 11 q + 31\right).
\end{align*}
\end{Examp}

\subsection{Inductive poset}
Let $(\mathcal{P},\leq)$ be a finite poset with a unique minimal element $\hat{0}$.
A poset $\mathcal{P}$ is ranked if it satisfies that for any $x\in\mathcal{P}$, all maximal chains among those with $x$ as greatest element have the same finite length, while we denote the length by $\rank(x)$.
We define the rank of a poset $\mathcal{P}$ to be $\rank(\mathcal{P}):=\max \{\rank(x) \mid x \in \mathcal{P}\}$.
An element $a\in\mathcal{P}$ with $\rank(a)=1$ is called an atom.
We define by $\Atom(\mathcal{P})$ the set of atoms of $\mathcal{P}$.
For $x,y\in \mathcal{P}$, we define the join $x \vee y$ and the meet $x \wedge y$ as follows:
\begin{align*}
x \vee y:=\min \{z\in \mathcal{P} \mid z\ge x\ \text{and}\ z \ge y \},\ 
x \wedge y:=\max \{z \in \mathcal{P} \mid z \le x\ \text{and}\ z \le y \}.
\end{align*}
The poset $\mathcal{P}$ is said to be a lattice if $|x \vee y| = 1$ and $|x \wedge y| = 1$ for any $x, y \in \mathcal{P}$.
We write $x<\mathrel{\mkern-5mu}\mathrel{\cdot} y$ if $x<y$ and $x\le z<y$ implies $x = z$.
A lattice $\mathcal{P}$ is called geometric if it satisfies the following condition: $x<\mathrel{\mkern-5mu}\mathrel{\cdot} y$ if and only if there exists an atom $a \in \mathcal{P}$ such that $a \not\le x$ and $\{y\} = x \vee a$.
A poset $\mathcal{P}$ is said to be locally geometric if $\mathcal{P}_{\leq x}:=\{y\in\mathcal{P}\mid y\leq x\}$ is a geometric lattice for any $x\in\mathcal{P}$.
If $\mathcal{P}$ is locally geometric, then $\mathcal{P}_{\leq x}$ and $\mathcal{P}_{\geq x}:=\{y\in\mathcal{P}\mid y\geq x\}$ are geometric lattices for all $x\in\mathcal{P}$ \cite[Remark 2.2.6]{Bibby-Delucchi2022}.
We define the M\"{o}bius function $\mu$ and the characteristic polynomial $\chi_{\mathcal{P}}(q)$ of the poset$\mathcal{P}$ as follows:
\begin{align*}
\mu(a,b)&:=
\begin{cases}
0&\ \text{if}\ a\not\leq b,\\
1&\ \text{if}\ a=b,\\
-\sum_{a\leq c< b}\mu(a,c)&\ \text{if}\ a<b
\end{cases}
\qquad \text{for}\ a,b\in \mathcal{P},\\
\chi_{\mathcal{P}}(q)&:=\sum_{x\in\mathcal{P}}\mu(\hat{0},x)q^{\rank(\mathcal{P})-\rank(x)}.
\end{align*}
Let $\mathcal{P}$ be a locally geometric poset and let $a\in\Atom(\mathcal{P})$. 
Then $\mathcal{P}^{\prime}$ is defined by the poset consisting of the minimal element $\hat{0}$ and all possible joins of the elements in $\Atom(\mathcal{P})\setminus\{a\}$.
In addition, $\mathcal{P}^{\prime\prime}:=\mathcal{P}_{\geq a}$.
The following class is defined by Pagaria, Pismataro, Tran, and Vecchi \cite{PPTV2024}.
\begin{Defi}
The class $\mathbf{IP}$ of inductive posets is the smallest class of locally geometric posets which satisfies
\begin{itemize}
\item[(1)]\ $\{\hat{0}\}\in\mathbf{IP}$, 
\item[(2)]\ if there exists an atom $a\in\Atom(\mathcal{P})$ such that $P^{\prime}\in\mathbf{IP}$, $P^{\prime\prime}\in\mathbf{IP}$, and $\chi_{P^{\prime\prime}}(q)$ divides $\chi_{P^{\prime}}(q)$, then $P\in\mathbf{IP}$.
\end{itemize}
\end{Defi}
We now explain how to regard a $k$-constituent as a characteristic polynomial of some poset (see \cite{Liu-Tran-Yoshinaga21,PPTV2024,Tran-yoshinaga2019}).
For a given tuple $\alpha=(a_1,\dots,a_m)\in\mathbb{Z}^m$, we can define the hypertorus $H_{\alpha,\mathbb{S}^1}$ by
\begin{align*}
H_{\alpha,\mathbb{S}^1}:=\{\bm{t}\in(\mathbb{S}^1)^m\mid t_1^{a_1}\cdots t_m^{a_m}=1\}.
\end{align*}
For a set $A \subseteq \mathbb{Z}^m$, we define the central toric arrangement $\mathscr{A}_{\mathbb{S}^1}$ by
\begin{align*}
\mathscr{A}_{\mathbb{S}^1}:=\{\text{connected components of}\ H_{\alpha,\mathbb{S}^1}\mid \alpha\in A\}.
\end{align*}
Let $L$ be the set of connected components of nonempty intersections of elements in $\mathscr{A}_{\mathbb{S}^1}$.
Then $L$ is a poset by the reverse inclusion and $L$ is ranked by the dimension of its elements (layers).
It is known that $L$ is a locally geometric poset (\cite[Corollary 4.4.6]{Bibby-Delucchi2022}).
For any $k\in \mathbb{Z}$, the map $k$ is defined by $k:(\mathbb{S}^1)\rightarrow(\mathbb{S}^1)$ via $t\mapsto t^k$.
Define $L[k]:=\{x \in L\mid \bm{1}\in k(x)\}$.
\begin{Them}[Tran and Yoshinaga \cite{Tran-yoshinaga2019}]
For any $k|\rho_{\mathscr{A}}$, the $k$-constituent of $\chi_{\mathscr{A}}^{quasi}(q)$ coincides with the characteristic polynomial of $L[k]$, that is, $\chi_{\mathscr{A}}^k(q)=\chi_{L[k]}(q)$.
\end{Them}
From the above, it follows that the poset obtained by torusization of the arrangement treated in this paper is an inductive poset.
We note that the hyperplane $\{s_ix_i=0\}$ corresponds to the hypertori $\{t_i=\xi_i^j\}$ $(j=0,1,\dots,d_i-1)$, where $\xi_i$ is $d_i$th roots of unity.
\begin{Cor}
\ 
\begin{itemize}
\item[$(1)$]\ Let $\bm{s}_m=(s_1,\dots,s_m)$ be a tuple of integers with $s_m|\cdots|s_1$.
Let $L$ be the set of connected components of nonempty intersections of elements in the toric arrangement corresponding to $\mathcal{A}_m(\bm{s}_m)$.
Then $L[k]\in\mathbf{IP}$ for any $k|s_1$.
\item[$(2)$]\ Let $s_1,\dots,s_r$ be even numbers, let $s$ be an odd number with $s|s_r|\cdots|s_1$, and let $\bm{s}_m=(s_1,\dots,s_r,s,\dots,s)$ be a tuple of length $m$.
Let $L$ be the set of connected components of nonempty intersections of elements in the toric arrangement corresponding to $\mathcal{D}_m(\bm{s}_m)$.
Then $L[k]\in\mathbf{IP}$ for any $k|s_1$.
\end{itemize}
\end{Cor}
\begin{proof}
The proofs is the same as in Theorem \ref{thm-deleting-typeA}, Proposition \ref{prop-chiC-deleting}, and Theorem \ref{thm-chieo-deleting}.
\end{proof}

\section{Other examples}\label{sec-other-examples}
In this section, we study characteristic quasi-polynomials for hyperplane arrangements obtained by deleting several hyperplanes from the Coxeter arrangement, but are not factorized into linear polynomials.
\begin{Prop}\label{prop-chiB-deleting2}
Let $m\geq 2$. Let
\begin{align*}
\mathscr{A}(l)&=\mathcal{B}_m\setminus\left(\left\{\left\{x_m=0\right\}\right\}\cup\left\{\left\{x_i-x_m=0\right\}\,\middle|\,1\leq i\leq l\right\}\right)\quad(0\leq l\leq m-1),\\
\mathscr{A}^{\prime}(l)&=\mathscr{A}(m-1)\setminus\left\{\left\{x_j+x_m=0\right\}\,\middle|\,1\leq j\leq l\right\}\quad(0\leq l\leq m-1).
\end{align*}
Then the characteristic quasi-polynomials are described as follows.
\begin{itemize}
\item[$(1)$]\ $\displaystyle \chi_{\mathscr{A}(l)}^{quasi}(q)=
\begin{cases}
(q-2m+l+2)\prod_{i=1}^{m-1}(q-2i+1)&\quad\text{if $q$ is odd},\\
\left(q^2-(3m-l-3)q+2m^2-(l+3)m+1\right)\prod_{i=1}^{m-2}(q-2i)&\quad\text{if $q$ is even}.
\end{cases}$
\item[$(2)$]\ $\displaystyle \chi_{\mathscr{A}^{\prime}(l)}^{quasi}(q)=
\begin{cases}
(q-m+l+1)\prod_{i=1}^{m-1}(q-2i+1)&\quad\text{if $q$ is odd},\\
(q^2-(2m-l-2)q+m^2-(l+2)m+l+1)\prod_{i=1}^{m-2}(q-2i)&\quad\text{if $q$ is even}.
\end{cases}$
\end{itemize}
\end{Prop}
\begin{proof}
$(1)$\ We prove the assertion by induction on $l$.
Let $l=0$ and let $H=\{x_m=0\}$.
Then $\mathscr{A}(0)_q=(\mathcal{B}_m)_q\setminus\{H_q\}$ and $\left(\mathcal{B}_m\right)_q^{H_q}=(\mathcal{B}_{m-1})_q$.
We recall that
\begin{align*}
\chi_{\mathcal{B}_m}^{quasi}(q)=
\begin{cases}
\prod_{i=1}^{m}(q-2i+1)&\quad\text{if $q$ is odd},\\
(q-m)\prod_{i=1}^{m-1}(q-2i)&\quad\text{if $q$ is even}.
\end{cases}
\end{align*}
Therefore, by Corollary \ref{Cro-del-rest-gcd=1}, we have
\begin{align*}
\chi_{\mathscr{A}(0)}^{quasi}(q)&=
\chi_{\mathcal{B}_m}^{quasi}(q)+\chi_{\mathcal{B}_{m-1}}^{quasi}(q)\\
&=
\begin{cases}
(q-2m+2)\prod_{i=1}^{m-1}(q-2i+1)&\quad\text{if $q$ is odd},\\
(q^2-(3m-3)q+2m^2-3m+1)\prod_{i=1}^{m-2}(q-2i)&\quad\text{if $q$ is even}
\end{cases}
\end{align*}
since $(q-m)(q-2m+2)+(q-m+1)=q^2-(3m-3)q+2m^2-3m+1$ when $q$ is even.

Next let $l>0$.
Let $H=\{x_l-x_m=0\}$.
Then $\mathscr{A}(l)_q=\mathscr{A}(l-1)_q\setminus\{H_q\}$ and $\mathscr{A}(l-1)_q^{H_q}=(\mathcal{B}_{m-1})_q\cup\{\{2x_l=0\}_q\}$.
By Lemma \ref{lem-char-quasi-poly-Ds+2x} of the case when $r=0$, $\bm{s}_{m-1}=(1,\dots,1)$, and $l=1$, we have
\begin{align*}
\chi_{\mathcal{B}_{m-1}\cup\{\{2x_l=0\}\}}^{quasi}(q)=
\begin{cases}
\prod_{i=1}^{m-1}(q-2i+1)&\quad\text{if $q$ is odd},\\
(q-m)\prod_{i=1}^{m-2}(q-2i)&\quad\text{if $q$ is even}
\end{cases}
\end{align*}
since $d(k)=1$ for $k=1,2$ in this case.
By the induction hypothesis and Corollary \ref{Cro-del-rest-gcd=1}, we have
\begin{align*}
\chi_{\mathscr{A}(l)}^{quasi}(q)&=
\chi_{\mathscr{A}(l-1)}^{quasi}(q)+\chi_{\mathcal{B}_{m-1}\cup\{\{2x_l=0\}\}}^{quasi}(q)\\
&=
\begin{cases}
(q-2m+l+2)\prod_{i=1}^{m-1}(q-2i+1)&\quad\text{if $q$ is odd},\\
\left(q^2-(3m-l-3)q+2m^2-(l+3)m+1\right)\prod_{i=1}^{m-2}(q-2i)&\quad\text{if $q$ is even}.
\end{cases}
\end{align*}

$(2)$ For $1\leq l\leq m-1$, let $H=\{x_l-x_m=0\}$.
Then $\mathscr{A}^{\prime}(l)_q=\mathscr{A}^{\prime}(l-1)_q\setminus\{H_q\}$ and $\mathscr{A}^{\prime}(l-1)_q^{H_q}=(\mathcal{B}_{m-1})_q$.
The assertion holds by induction on $l$.
\end{proof}

\begin{Lem}\label{lem-case-21...1and2}
Let $m\geq 3$ and let $\mathscr{A}=\{\{2x_1=0\}\}\cup\mathcal{D}_m$. Then
\begin{align*}
\chi_{\mathscr{A}}^{quasi}(q)=
\begin{cases}
(q-m)\prod_{i=1}^{m-1}(q-2i+1)&\quad\text{if $q$ is odd},\\
\left(q^2-2mq+(m-1)(m+2)\right)\prod_{i=1}^{m-2}(q-2i)&\quad\text{if $q$ is even}.
\end{cases}
\end{align*}
\end{Lem}
\begin{proof}
We define a tuple $\bm{s}$ of length $1$ by $\bm{s}=(2)$.
Then $\mathscr{A}=\mathcal{D}_m(\bm{s})$ with $r=t=1$.
We have $\rho_{\mathcal{D}_{m}(\bm{s})}=\lcm(s_1,2)=2$ and $\displaystyle d_{1}=\gcd(k,s_1)=
\begin{cases}
1\ &\text{if}\ k=1,\\
2\ &\text{if}\ k=2.
\end{cases}$
By Theorem \ref{thm-charquaipoly-defomD}, if $q$ is odd, then
\begin{align*}
\chi_{\mathcal{D}_{m}(\bm{s})}^{1}(q)
&=(q-d_1)\left(\prod_{i=2}^m(q-2i+1)+(m-1)\prod_{i=2}^{m-1}(q-2i+1)\right)\\
&=(q-1)(q-m)\prod_{i=2}^{m-1}(q-2i+1)
=(q-m)\prod_{i=2}^{m-1}(q-2i+1).
\end{align*}
If $q$ is even, then
\begin{align*}
\chi_{\mathcal{D}_{m}(\bm{s})}^{2}(q)
&=(q-d_1)\left(\prod_{i=2}^m(q-2i)+2(m-1)\prod_{i=2}^{m-1}(q-2i)+(m-1)(m-2)\prod_{i=2}^{m-2}(q-2i)\right)\\
&=(q-2)\left((q-2m)(q-2m+2)+2(m-1)(q-2m+2)+(m-1)(m-2)\right)\prod_{i=2}^{m-2}(q-2i)\\
&=(q^2-2mq+(m-1)(m+2))\prod_{i=1}^{m-2}(q-2i).
\end{align*}
\end{proof}

\begin{Prop}\label{prop-chiD-deleting}
Let $m\geq 4$ and let
\begin{align*}
\mathscr{A}(l)&=\mathcal{D}_m\setminus\left\{\left\{x_i-x_m=0\right\}\,\middle|\,1\leq i\leq l\right\}\quad(0\leq l\leq m-1),\\
\mathscr{A}^{\prime}(l)&=\mathscr{A}(m-1)\setminus\left\{\left\{x_j+x_m=0\right\}\,\middle|\,1\leq j\leq l\right\}\quad(0\leq l\leq m-1).
\end{align*}
Let
\begin{align*}
f_l(q):=q^3-(4m-6-l)q^2+(m-1)(5m-8-2l)q-(m-2)(2m^2-(l+2)m-l).
\end{align*}
Then the characteristic quasi-polynomials are described as follows.
\begin{itemize}
\item[$(1)$]\ $\displaystyle \chi_{\mathscr{A}(l)}^{quasi}(q)=
\begin{cases}
(q-2m+3+l)(q-m+1)\prod_{i=1}^{m-2}(q-2i+1)&\quad\text{if $q$ is odd},\\
f_l(q)\prod_{i=1}^{m-3}(q-2i)&\quad\text{if $q$ is even}.
\end{cases}$
\item[$(2)$]\ $\displaystyle \chi_{\mathscr{A}^{\prime}(l)}^{quasi}(q)=
\begin{cases}
(q-m+1+l)(q-m+2)\prod_{i=1}^{m-2}(q-2i+1)&\ \text{if $q$ is odd},\\
(q-m+1+l)\left(q^2-2(m-2)q+(m-1)(m-2)\right)\prod_{i=1}^{m-3}(q-2i)&\ \text{if $q$ is even}.
\end{cases}$
\end{itemize}
\end{Prop}
\begin{proof}
We prove the assertion by induction on $l$.
Let $l=0$. Then $\mathscr{A}(0)=\mathcal{D}_m$. It follows that
\begin{align*}
f_0(q)&=q^3-(4m-6)q^2+(m-1)(5m-8)q-(m-2)(2m^2-2m)\\
&=(q-2m+4)\left(q^2-2(m-1)q+m(m-1)\right).
\end{align*}
By Theorem \ref{thm-cox-chrquaipoly-ONB}, we have
\begin{align*}
\chi_{\mathscr{A}(0)}^{quasi}(q)&=
\begin{cases}
(q-m+1)\prod_{i=1}^{m-1}(q-2i+1)&\ \text{if $q$ is odd},\\
\left(q^2-2(m-1)q+m(m-1)\right)\prod_{i=1}^{m-2}(q-2i)&\ \text{if $q$ is even}
\end{cases}\\
&=
\begin{cases}
(q-2m+3)(q-m+1)\prod_{i=1}^{m-2}(q-2i+1)&\ \text{if $q$ is odd},\\
(q-2m+4)\left(q^2-2(m-1)q+m(m-1)\right)\prod_{i=1}^{m-3}(q-2i)&\ \text{if $q$ is even}
\end{cases}\\
&=
\begin{cases}
(q-2m+3)(q-m+1)\prod_{i=1}^{m-2}(q-2i+1)&\ \text{if $q$ is odd},\\
f_0(q)\prod_{i=1}^{m-3}(q-2i)&\ \text{if $q$ is even}.
\end{cases}
\end{align*}

Let $l>0$. We set $H=\{x_l-x_m=0\}$. Then $\mathscr{A}(l)_q=\mathscr{A}(l-1)_q\setminus\{H_q\}$.
By the induction hypothesis,
\begin{align*}
\chi_{\mathscr{A}(l-1)}^{quasi}(q)=
\begin{cases}
(q-2m+2+l)(q-m+1)\prod_{i=1}^{m-2}(q-2i+1)&\quad\text{if $q$ is odd},\\
f_{l-1}(q)\prod_{i=1}^{m-3}(q-2i)&\quad\text{if $q$ is even}
\end{cases}
\end{align*}
We have $\mathscr{A}(l-1)_q^{H_q}=\{\{2x_1=0\}_q\}\cup(\mathcal{D}_{m-1})_q$ and
\begin{align*}
\chi_{\{\{2x_1=0\}\}\cup\mathcal{D}_{m-1}}^{quasi}(q)=
\begin{cases}
(q-m+1)\prod_{i=1}^{m-2}(q-2i+1)&\ \text{if $q$ is odd},\\
(q^2-2(m-1)q+(m-2)(m+1))\prod_{i=1}^{m-3}(q-2i)&\ \text{if $q$ is even}
\end{cases}
\end{align*}
by Lemma \ref{lem-case-21...1and2}. Here
\begin{align*}
&f_{l-1}(q)+q^2-(2m-2)q+(m-2)(m+1)\\
=&\,q^3-(4m-5-l)q^2+(m-1)(5m-6-2l)q-(m-2)(2m^2-(l+1)m-k+1)\\
&+q^2-2(m-1)q+(m-2)(m+1)\\
=&\,q^3-(4m-6-l)q^2+(m-1)(5m-8-2l)q-(m-2)(2m^2-(l+2)m-l)
=f_l(q).
\end{align*}
Therefore
\begin{align*}
\chi_{\mathscr{A}(l)}^{quasi}(q)&=
\chi_{\mathscr{A}(l-1)}^{quasi}(q)+\chi_{\{\{2x_1=0\}\}\cup\mathcal{D}_{m-1}}^{quasi}(q)\\
&=
\begin{cases}
(q-2m+3+l)(q-m+1)\prod_{i=1}^{m-2}(q-2i+1)&\quad\text{if $q$ is odd},\\
f_l(q)\prod_{i=1}^{m-3}(q-2i)&\quad\text{if $q$ is even}.
\end{cases}
\end{align*}

$(2)$ Let $l=0$. Then $\mathscr{A}^{\prime}(0)=\mathscr{A}(m-1)$.
It follows that
\begin{align*}
f_{m-1}(q)&=q^3-(3m-5)q^2+3(m-1)(m-2)q-(m-2)(m-1)^2\\
&=(q-m+1)(q^2-2(m-2)q+(m-1)(m-2)).
\end{align*}
Therefore
\begin{align*}
\chi_{\mathscr{A}^{\prime}(0)}^{quasi}(q)
&=
\begin{cases}
(q-m+1)(q-m+2)\prod_{i=1}^{m-1}(q-2i+1)&\ \text{if $q$ is odd},\\
(q-m+1)(q^2-2(m-2)q+(m-1)(m-2))\prod_{i=1}^{m-3}(q-2i)&\ \text{if $q$ is even}.
\end{cases}
\end{align*}

Next let $l>0$ and we set $H=\{x_l+x_m=0\}$
Then $\mathscr{A}^{\prime}(l)_q=\mathscr{A}^{\prime}(l-1)_q\setminus\{H_q\}$ and $\displaystyle \mathscr{A}^{\prime}(l-1)_q^{H_q}=(\mathcal{D}_{m-1})_q$.
By Theorem \ref{thm-cox-chrquaipoly-ONB}, Lemma \ref{lem-inc-exc}, and the induction hypothesis, we have
\begin{align*}
\chi_{\mathscr{A}^{\prime}(l)}^{quasi}(q)&=
\chi_{\mathscr{A}^{\prime}(l-1)}^{quasi}(q)+\chi_{\mathcal{D}_{m-1}}^{quasi}(q)\\
=&\,
\begin{cases}
(q-m+l)(q-m+2)\prod_{i=1}^{m-2}(q-2i+1)&\ \text{if $q$ is odd},\\
(q-m+l)(q^2-2(m-2)q+(m-1)(m-2))\prod_{i=1}^{m-3}(q-2i)&\ \text{if $q$ is even}
\end{cases}\\
&+
\begin{cases}
(q-m+2)\prod_{i=1}^{m-2}(q-2i+1)&\ \text{if $q$ is odd},\\
((q^2-2(m-2)q+(m-1)(m-2))\prod_{i=1}^{m-3}(q-2i)&\ \text{if $q$ is even}
\end{cases}\\
=&\,
\begin{cases}
(q-m+1+l)(q-m+2)\prod_{i=1}^{m-2}(q-2i+1)&\ \text{if $q$ is odd},\\
(q-m+1+l)(q^2-2(m-2)q+(m-1)(m-2))\prod_{i=1}^{m-3}(q-2i)&\ \text{if $q$ is even}.
\end{cases}
\end{align*}
\end{proof}

\begin{Examp}
In the case when $m=4$, we list the characteristic quasi-polynomials for $\mathscr{A}(l)$ and $\mathscr{A}^{\prime}(l)$ defined in Proposition \ref{prop-chiD-deleting}:
\begin{align*}
\chi_{\mathscr{A}(0)}^{quasi}(q) &= 
\begin{cases}
\left(q - 5\right) \left(q - 3\right)^2 \left(q - 1\right) &\ (q\ \text{is odd}), \\
\left(q - 4\right)\left(q - 2\right)\left(q^2 -6q + 12\right) &\ (q\ \text{is even}). 
\end{cases}\\
\chi_{\mathscr{A}(1)}^{quasi}(q) &= 
\begin{cases}
\left(q - 4\right) \left(q - 3\right)^{2} \left(q - 1\right) &\ (q\ \text{is odd}), \\ 
\left(q - 2\right) \left(q^{3} - 9 q^{2} + 30 q - 38\right) &\ (q\ \text{is even}).
\end{cases}\\
\chi_{\mathscr{A}(2)}^{quasi}(q) &=
\begin{cases}
\left(q - 3\right)^{3} \left(q - 1\right) &\ (q\ \text{is odd}), \\
\left(q - 2\right) \left(q^{3} - 8 q^{2} + 24 q - 28\right) &\ (q\ \text{is even}). 
\end{cases}\\
\chi_{\mathscr{A}(3)}^{quasi}(q) &=
\begin{cases}
\left(q - 3\right)^{2} \left(q - 2\right) \left(q - 1\right) &\ (q\ \text{is odd}), \\
\left(q - 3\right) \left(q - 2\right) \left(q^{2} - 4 q + 6\right) &\ (q\ \text{is even}). 
\end{cases}\\
\chi_{\mathscr{A}^{\prime}(1)}^{quasi}(q) &=
\begin{cases}
\left(q - 3\right) \left(q - 2\right)^{2} \left(q - 1\right) &\ (q\ \text{is odd}), \\
\left(q - 2\right)^{2} \left(q^{2} - 4 q + 6\right) &\ (q\ \text{is even}). 
\end{cases}\\
\chi_{\mathscr{A}^{\prime}(2)}^{quasi}(q) &=
\begin{cases}
\left(q - 3\right) \left(q - 2\right) \left(q - 1\right)^{2} &\ (q\ \text{is odd}), \\
\left(q - 2\right) \left(q - 1\right) \left(q^{2} - 4 q + 6\right)&\ (q\ \text{is even}). 
\end{cases}\\
\chi_{\mathscr{A}^{\prime}(3)}^{quasi}(q) &=
\begin{cases}
q \left(q - 3\right) \left(q - 2\right) \left(q - 1\right) &\ (q\ \text{is odd}), \\
q \left(q - 2\right) \left(q^{2} - 4 q + 6\right) &\ (q\ \text{is even}). 
\end{cases}
\end{align*}
\end{Examp}

\section*{Acknowledgments}
This work was supported by JSPS KAKENHI Grant Number JP20K14282.

\end{document}